\documentclass[11pt]{article}
\title{Models of weak theories of truth}

\usepackage{amsthm}
\usepackage{amssymb}
\usepackage{amscd}

\usepackage{amsmath}
\usepackage{amsfonts}
\usepackage{fancyhdr}
\usepackage{comment}

\newtheorem{thm}{Theorem}[section]
\newtheorem{fak}[thm]{Fact}
\newtheorem{cor}[thm]{Corollary}
\newtheorem{lem}[thm]{Lemma}
\newtheorem{stw}[thm]{Proposition}
\theoremstyle{definition}
\newtheorem{defin}[thm]{Definition}

\newtheorem{exa}[thm]{Example}

\newtheorem{obs}[thm]{Observation}
\newtheorem{konw}[thm]{Convention}

\newcommand{\df}[1]{\textbf{#1}}

\newcommand{\num}[1]{\underline{#1}}
\newcommand{\CT}{\textnormal{CT}}
\newcommand{\TB}{\textnormal{TB}}
\newcommand{\UTB}{\textnormal{UTB}}

\newcommand{\PA}{\textnormal{PA}}
\newcommand{\PAP}{\textnormal{PAP}}
\newcommand{\PAT}{\textnormal{PAT}}
\newcommand{\ZFC}{\textnormal{ZFC}}
\newcommand{\Term}{\textnormal{Term}}
\newcommand{\Sent}{\textnormal{Sent}}
\newcommand{\Form}{\textnormal{Form}}
\newcommand{\Clterm}{\textnormal{ClTerm}}
\newcommand{\ClTerm}{\Clterm}

\usepackage[utf8]{inputenc}
\usepackage{lmodern}

\author{Mateusz Łełyk, Bartosz Wcisło}

\begin{document}

\maketitle

\begin{abstract}
In the following paper we propose a model-theoretical way of comparing the "strength" of various truth theories which are conservative over $\PA$.
 Let $\mathfrak{Th}$ denote the class of models of $\PA$ which admit an expansion to a model of theory $Th$. We show (combining some well known results and  original ideas) that 
$$\mathfrak{PA}\supset \mathfrak{TB}\supset \mathfrak{RS}\supset \mathfrak{UTB}\supseteq\mathfrak{CT^-},$$
where $\mathfrak{PA}$ denotes simply the class of all models of $\PA$ and $\mathfrak{RS}$ denotes the class of recursively saturated models of $\PA$. Our main original result is that every model of $\PA$ which admits an expansion to a model of $\CT^-$, admits also an expanion to a model of $\UTB$. Moreover, as a corollary to one of the results we conclude that $\UTB$ is not relatively interpretable in $\TB$, thus answering the question from \cite{fujimoto}.

\end{abstract}

\section{Introduction}

Our paper concerns models of weak theories of truth. By a ''theory of truth'' we mean an extension of Peano Arithmetic (henceforth denoted by $\PA$) with axioms for an additional unary predicate $T(x)$ with intended reading ''$x$ is (a G\"{o}del code of) a true sentence''. We call a theory of truth ''weak'' iff it is a \emph{conservative} extension of $\PA$. Theories of truth are of interest both because of the philosophical context in which they emerged and of insights into the structure of models of $\PA$ which they provide us with. Let us briefly comment on both these issues.

Truth theories constitute a well established area of research in contemporary epistemology and logic (for a comprehensive introduction and reference see \cite{Halb}). In particular weak theories of truth have been introduced as a coherent formal framework for an explication of some stances in the debate over the metaphysical status of the notion of truth, specifically so called deflationary theories of truth. The deflationists claim that:

\begin{enumerate}
	\item Sentences of the form ''$\phi$ is true'' do not ascribe any actual property to the sentence 	$\phi$,
	\item The meaning of truth predicate is completely analysable in terms of Tarski's disquotation 			scheme.
\end{enumerate}  

 The first claim is contemporarily explicated by some authors (see \cite{Shap}, \cite{Field}) in terms of conservativity of a theory of truth over a theory of syntax (the latter usually modelled as $\PA$). Namely: the claim that the predicate ''$\phi$ is true'' does not express any actual property is rearticulated as a thesis that the correct theory of truth should be conservative over $\PA.$ This is precisely where the interest in weak theories of truth as defined in the following article comes from. 
 
The second claim is usually explicated as a thesis that the notion of truth is axiomatizable by Tarski's scheme (i.e. axioms of the form $T \ulcorner \phi \urcorner \equiv \phi,$ where $\phi$ is a formula) or, more precisely, some syntactic restriction thereof (e.g. Tarski's scheme restricted to arithmetical sentences; for a detailed discussion of this explication of the deflationary theory of truth see \cite{Ketl}, for a discussion of both theses see again \cite{Halb}). Formal theories of truth satisfying both above conditions such as $\TB$ and $\UTB$, defined later on in this paper, are subject to investigation as deflationary theories \textit{par excellence}.

Our research in the structure of models of weak theories of truth is related to another possible interpretation of the first claim of the deflationary theory of truth. This explication claims that the correct theory of truth should be \emph{model-theoretically conservative} over $\PA,$ i.e. every model $M$ of $\PA$ should admit an expansion to a model $(M,T)$ of the deflationary theory of truth. Speaking a bit na\"ively, if ''$\phi$ is true'' really doesn't express any genuine property, then its admissibility should not impose any conditions on how the object domain looks like, which can in turn be directly explicated as a model-theoretic conservativeness of the theory of truth. A very similar argument for the syntactic conservativeness interpretation of the deflationary theory of truth has been presented in \cite{Shap}.

As of this moment, we are highly sceptical towards the adequacy of this explication in the debate on deflationism. Even if it is the case that some weak theory of truth is not model-theoretic conservative over $\PA,$ this is not a substantial objection to a deflationist, who might be sceptical \emph{exactly} whether Tarskian semantics provides a correct analysis of the relationship between the actual language and its object domain. It might happen however, that in course of the philosophical debate new arguments emerge for this stronger notion of conservativeness as an adequate explication. 

Model theoretic considerations might be also seen as a tool for fine-grained classification of the weak theories of truth whose strength cannot be measured by merely proof-theoretical considerations, since most important ones, $\UTB$ and $\CT^-$, are incomparable. Namely: $Th_1$ could be deemed not stronger than $Th_2$ if all models of $\PA$ that admit an expansion to $Th_2,$ admit also an expansion to $Th_1.$ In the following paper we actually prove that for the three most important weak theories of truth, i.e. $\TB$, $\UTB$ and $\CT^-$ the classes of models of $\PA$ that admit an extension to the models of respective truth theory can be linearly ordered by inclusion. I.e our result is that: 
\begin{equation}\label{equat: main}\tag{$\ast$}
\mathfrak{PA}\supset \mathfrak{TB}\supset \mathfrak{RS}\supset \mathfrak{UTB}\supseteq\mathfrak{CT^-}
\end{equation}
where $\mathfrak{PA}$ denotes simply the class of all models of $\PA,$ $\mathfrak{RS}$ denotes the class of recursively saturated models of $\PA$ and by $\mathfrak{TB}, \mathfrak{UTB}, \mathfrak{CT}^-$ we mean the class of models of $\PA$ which admit an extension to a model of $\TB, \UTB, \CT^-$ respectively. Note that $\supset$ means ''strict inclusion''. Note that we \emph{do not assume} that models of $\PA$ we deal with are countable, which would make the right part of the above sequence trivially collapse, due to Barwise-Schlipf theorem (i.e. Fact \ref{bsr} in this paper).  

Seen from the purely model-theoretical perspective weak theories of truth are handy tool for obtaining interesting results about the structure of models of $\PA$. Most striking examples of their implementation include: easy proof of Smorynski--Stavi theorem, proof in $\ZFC$ of the existence of recursively saturated rather classless models of $\PA$ (both due to Schmerl, see \cite{Schmerl}) and the result that countable recursively saturated models of $\PA$ have recursively saturated end extensions. Although all these proofs had been established independently, weak theories of truth provided conceptual and uniform way of dealing with those complicated structures.

\subsection{A Short Commentary on the Main Result}

Let us have a word of comment on the main result. We proved that every model of $\PA$ which admits an expansion to a model of $\CT^-$, admits also an expansion to $\UTB$. Note that this is a strengthening of Stuart Smith's result given in \cite{Smith} as we show that the undefinable class proven to exist by Smith can be fully inductive. Moreover one can deduce Lachlan's theorem ("models of $\CT^-$ are recursively saturated") from it since an easy argument from overspill demonstrates that models of $\UTB$ are recursively saturated.\\

When one restricts attention to \emph{countable} models of $\PA$ then our result is an easy consequence of classical results in the model theory for $\PA$. What is more, a stronger theorem is true: both theories \emph{have exactly the same} countable models, and these are precisely recursively saturated ones. Let us sketch the argument: by Lachlan theorem (see \cite{Kaye}) every model of $\CT^-$ is recursively saturated and by Kotlarski-Krajewski-Lachlan Theorem (\cite{Kaye}) every such model carries an interpretation of $\CT^-$\footnote{One can give independent proof of this fact using resplendency of countable recursively saturated models (which is the content of Barwise-Schlipf-Ressayre Theorem (\ref{bsr})) and conservativity of $\CT^-$ proved independently by Enayat-Visser (in \cite{Enayat}) and Leigh (in \cite{Leigh})}. This ends the argument for $\CT^-$. To prove that each countable recursively saturated model of $\PA$ carries an interpretation of $\UTB$ truth class one uses resplendency of countable and recursively saturated models and proof-theoretical conservativity of $\UTB$ over $\PA$, which is a folklore result (which can be found in \cite{Halb}). The converse direction uses induction for the truth predicate in a straightforward way, and we give a proof of it in Section \ref{easy} (Proposition \ref{UTB to RS}).\\

In the general case things get much more complicated as many intriguing models emerge. First observation is that in a sense there are strictly more recursively saturated models than models which admit an interpretation of $\CT^-$ or $\UTB$. This is a consequence of the fact that rather classles models of $\PA$ can be recursively saturated (as stated in Theorem \ref{Schmerlthm} proved by Kauffman) and the fact that models of $\CT^-$ and $\UTB$ always carry an undefinable class (for $\CT^-$this is the content of Stuart Smith's theorem given in \cite{Smith}\footnote{Note that this result also becomes trivial when restricted to countable case}. For $\UTB$ this trivially follows by Tarski's theorem, as the interpretation of $\UTB$-truth predicate, being fully inductive, is always a class.) Both results are highly non-trivial. It is worth emphasizing that rather classles models of $\PA$ cannot separate $\UTB$ from $\CT^-$, hence one needs different tools to find a structure (if it can be found) which admits an interpretation of the latter but not the former theory. Our main result shows that as far as the inclusion $\mathfrak{CT}^-\subseteq \mathfrak{UTB}$ is concerned things behave like in the countable case, but this requires substantially different arguments.

\subsection{Structure of the paper}

In the Section \ref{sect: not and def} we introduce all the notions, both from truth theory and model theory of $\PA$, necessary to understand our results. In the next section (i.e. Section \ref{easy}) we give construction of a model of $\TB$ which cannot be extended to a model of $\UTB$, which is our original result. In the rest of this section we prove some well-known results which are needed to have our tower of inclusions (depicted in \eqref{equat: main}) completed. We devote Section \ref{sect: main} to the proof of our main theorem. This is the most technical and definitely the most difficult part of our paper. In the appendix, for the sake of completeness, we give Schmerl's construction of rather classles, recursively saturated model of $\PA$.

\section{Notation and definitions}\label{sect: not and def}

In this section we would like to introduce key definitions along with some notation. As for the latter we make a number of simplifications which strictly speaking might be ambiguous but no reasonable reading may cause any confusion. Considerations of fairly logical nature might be easily obscured by inappropriately heavy coding and putting too much stress on this aspect, which we tried to avoid.

\begin{konw}
	\begin{enumerate}
	\item $\PA$ denotes Peano Arithmetic, and $\mathcal{L}_{\PA}$ is the language in which it is formalized 			(for the sake of definiteness we assume that $\mathcal{L}_{\PA} = \{\cdot, +, \leq, 0, 1, S\}$ where $S$ is a one argument function and $\cdot$, $+$ are two argument functions).
	\item We use big capital letters $M$, $N \ldots$  for models of $\PA$ even if not stated explicitly.
	\item We use $\Form(x)$, $\Sent(x)$, $\Term(x)$, $\ClTerm(x)$ to denote formulae representing in $\PA$ sets of (G\"odel codes of) respectively (arithmetical) formulae, sentences, terms and closed terms. If $M$ is a model of $\PA$, then by $\Form(M)$ we mean the set of (the codes of) arithmetical formulae in this model. Similarly for $\Sent, \Term$ and $\Clterm$.  We use $\num{x}$ to denote the $x$-th numeral, i.e. the closed term of the form $S \ldots S (0),$ where the successor symbol $S$ has been repeated $x$ times. 
	\item We skip Quine's corners when talking about G\"odel codes of syntactic objects, e.g. we write
	$$\Phi(\psi)$$
	instead of $\Phi(\ulcorner \psi\urcorner).$
	\item 
	We will implicitly assume that, when bounded by a quantifier, variables $s,t, \ldots$ refer to (G\"odel codes of) terms and $\phi, 		\psi, \ldots$ refer to (G\"odel codes of) formulae. In particular we write 
	$$\forall t \ \ \phi(t)$$
	instead of
	$$\forall x (\ClTerm(x) \longrightarrow \phi(x)),$$
	and we treat
	$$\forall  \psi \ \ \Phi(\psi)$$
	in the same fashion. Analogously for the existential quantifier.
		\item We write $t^{\circ}$ to denote the result of formally evaluating the (G\"odel code of) term $t$. If $\bar{t} = (t_1, \ldots, t_n)$ is a tuple of terms of a fixed length, then $\bar{t}^{\circ} = (t_1^{\circ}, \ldots, t_n^{\circ}).$
		\item We sometimes write the result of syntactical operations with no mention of the operations 			themselves e.g.
	$$\exists t \ \ \Psi(\phi(t))$$
	stands for
	$$\exists t \ \ \Psi(Subst(\phi, t))$$
	where $Subst(x,y)$ is a formula representing substitution function. In a similar fashion
	$$\Xi(\phi \wedge \psi)$$
	stands for
	$$\forall \theta \ \ \Bigl( \theta=Conj(\phi, \psi) \rightarrow \Xi(\theta) \Bigr).$$
where $Conj(x,y)$ stands for the formula representing function which takes two (G\"odel codes of) formulae to (the G\"odel code of) their conjunction.
\item If $P(x)$ is any predicate, then we denote the language $\mathcal{L}_{\PA}\cup \{P\}$ by $\mathcal{L}_{\PAP}$.
\item If $P(x)$ is any predicate then by $Ind(P)$ we mean the set of all instantiations of induction scheme for all formulae in the language $\mathcal{L}_{\PAP}$. If $\phi$ is any formula then by $Ind ( \phi)$ we mean the induction axiom for the formula $\phi.$ 
\end{enumerate}
\end{konw} 

We shall now introduce the theories which will be considered in this paper. They have been all extensively discussed in \cite{Halb}.

\begin{defin} All the theories are formalised in the language $\mathcal{L}_{\PAT}$ and are extensions of $\PA$ (below we list only the additional axioms).
\begin{enumerate}
\item $\TB^-$ is a theory axiomatized by the scheme (called \textit{Tarski Biconditional} scheme) 
$$T\phi \equiv \phi,$$
where $\phi$ is a sentence of $\mathcal{L}_{\PA}.$
\item $\UTB^-$ is a theory axiomatized by the scheme (called \textit{Uniform Tarski Biconditional} scheme)
$$\forall \bar{t} \bigl(T\phi(\bar{t})\equiv\phi(\bar{t}^{\circ})\bigr),$$
where $\phi$ is a formula of $\mathcal{L}_{\PA}$
\item $\CT^-$ is finitely axiomatized by the following sentences
	\begin{enumerate}
		\item $\forall t,s  \ \ \Bigl(T(R(t,s)) \equiv R(t^{\circ},s^{\circ})\Bigr)$ where $R$ is $=$ or $\leq$.
		\item $\forall  \phi,\psi \ \ \Bigl(T(\phi\otimes \psi)\equiv T(\phi)\otimes T(\psi) \Bigr)$, where $\otimes$ is $\wedge$ or $\vee$.
		\item $\forall \phi \ \ \Bigl(T(\neg\phi)\equiv \neg T(\phi) \Bigr)$.
		\item $\forall \phi \ \ \Bigl(T(Qx\phi(x))\equiv Qt T(\phi(t))\Bigr),$ where $Q$ is $\exists$ or $\forall$.
		
	\end{enumerate}
\item $\TB$, $\UTB$, $CT$ are the extensions of $\TB^-$, $\UTB^-$, $\CT^-$ respectively with full induction for the enriched language, i.e.
$\TB$ = $\TB^- \cup Ind(T)$, $\UTB = \UTB^-\cup Ind(T)$, $CT = \CT^- \cup Ind(T)$. 
\end{enumerate}
\end{defin}

\begin{konw}
As suggested by the examples of weak theories of truth in the Introduction, if $Th$ is any theory extending $\PA$ then by $\mathfrak{TH}$ we denote the class of those models of $\PA$ which admits an extension to a model of $T$.
\end{konw}

Let us state some standard results in model theory of $\PA$ which we will make use of. Their proofs may be found in \cite{Kaye}. In all the following facts we assume $\mathcal{L}$ to be any language extending $\mathcal{L}_{\PA}.$

	\begin{fak}[Overspill lemma] \label{overspill} 
	Let $\phi(x) \in \mathcal{L}$ be any formula. Let $M$ be any $\mathcal{L}$-structure such that $M \models Ind (\phi).$ Suppose that for all $n \in \omega$, $M \models \phi(n).$ Then for some nonstandard $c \in M$ 
	$$M \models \phi(c).$$
	\end{fak}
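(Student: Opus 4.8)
The plan is to argue by contradiction, using the one induction axiom $Ind(\phi)$ to show that $\phi$ cannot hold \emph{exactly} on the standard elements. As is implicit in the conventions above, $M$ here is a nonstandard structure (its $\mathcal{L}_{\PA}$-reduct being a model of $\PA$); nonstandardness is needed even to make the conclusion non-vacuous. In fact the argument will only use that $M$ is nonstandard and that $M \models Ind(\phi)$.

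First I would suppose, for contradiction, that no nonstandard $c \in M$ satisfies $\phi$; equivalently, $M \models \neg\phi(c)$ for every nonstandard $c$. Together with the hypothesis that $M \models \phi(\num{n})$ for every $n \in \omega$, this says precisely that in $M$ the formula $\phi$ holds of an element iff that element is standard. Now I would check the two premises of $Ind(\phi)$. Since $0$ is the interpretation of the numeral $\num{0}$ it is standard, so $M \models \phi(0)$ by hypothesis. For the inductive step, fix $a \in M$ and consider $\phi(a) \rightarrow \phi(a+1)$: if $a$ is standard, say $a$ is the value of $\num{n}$, then $a+1$ is the value of $\num{n+1}$ and hence also standard, so both $\phi(a)$ and $\phi(a+1)$ hold; if $a$ is nonstandard, then $M \models \neg\phi(a)$ by the contradiction hypothesis, so the implication holds vacuously. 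In either case $M \models \phi(a) \rightarrow \phi(a+1)$, and as $a$ was arbitrary, $M \models \forall x\,(\phi(x) \rightarrow \phi(x+1))$.

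Applying $Ind(\phi)$ then yields $M \models \forall x\, \phi(x)$. But $M$ is nonstandard, so it has a nonstandard element $c$, and for this $c$ we get $M \models \phi(c)$, contradicting the contradiction hypothesis. Hence some nonstandard $c$ satisfies $\phi$, which is what was to be shown.

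I do not expect any genuine obstacle here: this is a textbook fact (see \cite{Kaye}), and the proof is a two-line induction once the right formula is spotted. The only points needing a little care are the tacit assumption that $M$ is nonstandard (without it the statement is literally false, witnessed by $\phi(x) := x=x$) and the case split in the inductive step, where the nonstandard case is discharged \emph{vacuously} rather than by any positive reasoning about $\phi$.
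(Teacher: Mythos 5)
Your proof is correct and is exactly the standard overspill argument: the paper states this as a Fact and defers the proof to \cite{Kaye}, where the same contradiction-via-$Ind(\phi)$ reasoning appears. Your two caveats (that $M$ must be nonstandard for the statement to be non-vacuous, and that the successor step is discharged vacuously on nonstandard elements) are both apt and correctly handled.
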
	
	
	\begin{fak}[Prime models] 
	Let $Th$ be any theory extending $\PA$ with full induction scheme for the whole $\mathcal{L}.$  Then there exists the prime model $K(Th)$ of the theory $Th.$ Moreover, all elements of $K(Th)$ are definable, i.e. for any $c \in K(Th)$ there exists a formula $\phi(x) \in \mathcal{L}$ such that
	$$K(Th) \models \phi(c) \wedge \exists ! \ x \phi(x).$$
	\end{fak}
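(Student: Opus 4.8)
The plan is to produce the prime model explicitly as the submodel of \emph{definable elements}. Fix any model $M \models Th$ and let
$K(Th)$ be the set of those $a \in M$ for which there is an $\mathcal{L}$-formula $\phi(x)$ with $M \models \phi(a) \wedge \exists ! \, x \, \phi(x)$. First I would check that $K(Th)$ is closed under $0,1,S,+,\cdot$ — e.g. if $a,b$ are defined by $\phi,\psi$ then $a+b$ is defined by $\exists u \exists v (\phi(u) \wedge \psi(v) \wedge x = u+v)$ — so that $K(Th)$ is (the universe of) a substructure of $M$; by construction every one of its elements is definable, which is the "moreover" clause.

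The core of the argument is that $K(Th) \preceq M$, which I would get from the Tarski--Vaught test. Suppose $\phi(x,\bar y) \in \mathcal{L}$, $\bar a \in K(Th)$, and $M \models \exists x \, \phi(x,\bar a)$. Since $Th$ contains full induction for $\mathcal{L}$, it proves the least number principle for every $\mathcal{L}$-formula, so there is a $\leq$-least $b \in M$ with $M \models \phi(b,\bar a)$. Writing each $a_i$ as the unique solution of a formula $\psi_i(y)$, the element $b$ is then the unique solution of $\exists \bar y \bigl(\bigwedge_i \psi_i(y_i) \wedge \phi(x,\bar y) \wedge \forall x' < x \, \neg \phi(x',\bar y)\bigr)$, hence $b \in K(Th)$ and provides the required witness. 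Thus $K(Th) \preceq M$, and in particular $K(Th) \models Th$. I expect this step — specifically the reduction of full induction to the least number principle to manufacture a \emph{definable} witness — to be the heart of the proof; everything around it is bookkeeping.

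It remains to see that $K(Th)$ is prime, i.e. embeds elementarily into an arbitrary $N \models Th$ (which of course presupposes $Th$ complete, as is the case wherever this fact is applied). I would send $a \in K(Th)$, defined by $\phi_a$, to the unique element of $N$ satisfying $\phi_a$: this is well defined because $\exists ! \, x \, \phi_a(x)$ holds in $K(Th) \models Th$, hence belongs to $Th$, hence holds in $N$. Independence of the choice of defining formula and elementarity of the map both follow from the same principle: any first-order condition on finitely many elements of $K(Th)$ translates, via their definitions, into a sentence true in $K(Th)$, hence in $Th$, hence in $N$. This yields an elementary embedding $K(Th) \hookrightarrow N$. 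Finally, for uniqueness of $K(Th)$ up to isomorphism one notes that between two prime models the mutual elementary embeddings are necessarily surjective, since a definable element must be mapped to the unique element carrying the same definition, and every element of either model is definable.
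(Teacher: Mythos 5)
Your proof is correct and follows the standard argument (the definable-elements submodel of an arbitrary model of $Th$, with the least number principle supplying definable Tarski--Vaught witnesses) that the paper itself does not spell out but delegates to the reference \cite{Kaye}. Your caveat that primality presupposes $Th$ complete is the right reading of the statement, and it is satisfied in the paper's only use of this fact (prime models of complete extensions of $\PA$).
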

	
	Let us define three important kinds of extensions of models of $\PA.$ 
	
	\begin{defin}
	Let $M \subset M'$ be any extension of models of $\PA.$ We call such ans extension \emph{conservative} (denoted $\subset_{cons}$) iff for any set $X \subset M'$ definable in $M'$ the set $X \cap M$ is definable in $M.$ We call it an \emph{end extension} (denoted $\subset_{end}$) iff any $c \in M' \setminus M$ dominates $M,$ i.e. $c>b$ for all $b \in M$ and $M' \setminus M \neq \emptyset.$ We call an extension \emph{cofinal} (denoted $\subset_{cf}$) iff for any $c \in M'$ there exists $b \in M$ such that $c<b.$ If such an extension is additionally elementary we denote it $\prec_{cons}, \prec_{end}, \prec_{cf}$ respectively.
	\end{defin}	
	
	It is an easy fact that any conservative extension is actually an end extension. Less obviously we have the following theorem.
	
	\begin{fak}[MacDowell-Specker Theorem] \label{mds}
	Let $Th$ be any theory extending $\PA$ with full induction scheme for $\mathcal{L}$ and let $M \models Th.$ Then there exists a model $M \prec M' \models Th$ such that $M'$ is a conservative extension of $M.$
	\end{fak}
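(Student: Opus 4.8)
The statement to prove: for $Th \supseteq \PA$ with full induction in $\mathcal L$ and $M \models Th$, there is $M \prec M' \models Th$ with $M \subset_{cons} M'$.

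Standard approach: build $M'$ as the model of **definable types**. Let me think about how I'd lay this out.

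The classical proof (Gaifman's ultrapower / the "minimal type" construction, or the direct Henkin-style construction): we want to realize a single new element $c$ over $M$ such that (1) $c$ sits above all of $M$ (end extension), and (2) the extension is conservative. The key device is a **definable ultrafilter** on the definable subsets of $M$, or equivalently a **complete type $p(x)$ over $M$** that is "definable" and "unbounded" and "rare/minimal."

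Structure of the argument:
- Step 1: Show there is a type $p(x)$ over $M$ (i.e., a maximal consistent set of $\mathcal L$-formulas with parameters from $M$, in one free variable) which is:
  (a) **unbounded**: for each $m \in M$, the formula $x > m \in p$;
  (b) **definable over $M$**: for each $\mathcal L$-formula $\phi(x,\bar y)$ there is an $\mathcal L$-formula $d_\phi(\bar y)$ such that for all $\bar m \in M$, $\phi(x,\bar m) \in p \iff M \models d_\phi(\bar m)$.
  The construction of $p$ uses an enumeration of all formulas $\phi(x,\bar y)$ and, at each stage, appeals to full induction in $M$ to "split" consistently — at stage $\phi$, one shows (by induction available in $M$, since $Th$ has full induction) that one of $\phi$, $\neg\phi$ can be added while preserving unboundedness of the partial type; the bookkeeping produces the defining formulas $d_\phi$ simultaneously.
- Step 2: Let $M' = M(p)$ be the extension of $M$ by a realization $c$ of $p$: take $M'$ to be the prime model over $M \cup \{c\}$, equivalently the set of all $t^M(c,\bar m)$ for $\mathcal L$-Skolem-definable (in $Th$) functions $t$ and $\bar m \in M$. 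Since $Th$ has full induction, $Th$ has definable Skolem functions, so this set is closed and forms an elementary extension $M'$ of $M$ with $M' \models Th$.
- Step 3 (end extension): Because $p$ is unbounded and, crucially, because $p$ is *complete*, any element $f^M(c,\bar m)$ of $M'$ is either in $M$ or lies above all of $M$ — this is where one needs that $p$ also avoids "bounded" pieces, i.e. that for any definable function $f$, $p$ decides whether $f(x)$ is bounded, and if bounded then $f(x)$ equals a fixed value eventually. This gives $M \subset_{end} M'$.
- Step 4 (conservativity): Given $X \subseteq M'$ definable in $M'$, say $X = \{a \in M' : M' \models \psi(a, c, \bar m)\}$ for some $\mathcal L$-formula $\psi$ and $\bar m \in M$. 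For $a \in M$, $M' \models \psi(a,c,\bar m) \iff \psi(a,x,\bar m) \in p \iff M \models d_\psi(a,\bar m)$ by definability of $p$. Hence $X \cap M = \{a \in M : M \models d_\psi(a,\bar m)\}$ is definable in $M$. This is the payoff of Step 1(b).

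The main obstacle is **Step 1**: constructing the unbounded definable type and verifying, via induction available in $M$, that the construction never gets stuck — in particular proving the "splitting lemma" (at each stage exactly one side keeps the partial type unbounded, and this choice is itself captured by an $\mathcal L$-formula). This is where the full induction hypothesis on $Th$ is genuinely used and where all the combinatorial work lives; Steps 2–4 are then essentially formal consequences (definable Skolem functions + completeness + the defining formulas $d_\phi$). I should also note the countable-language subtlety: if $\mathcal L$ is uncountable the enumeration/bookkeeping must be done transfinitely, but the splitting argument is unchanged.
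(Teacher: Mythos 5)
The paper gives no proof of this Fact at all --- it is quoted as a classical result with a pointer to Kaye's book --- so there is nothing to compare against except the standard argument, and your outline is indeed that standard argument: build a complete, unbounded, definable type $p(x)$ over $M$, realize it, take the Skolem hull (available since $Th$ has full induction and hence definable Skolem functions), and read off conservativity from the defining formulas $d_\phi$. Steps 2--4 are laid out correctly, including the correct observation that end-extension is \emph{not} automatic from completeness, unboundedness and definability alone but needs the extra ``bounded definable functions are eventually constant along $p$'' refinement. As a proof, however, the proposal is only a skeleton: the splitting lemma of Step 1 --- given a definable unbounded $A$ and a formula $\phi(x,\bar y)$, find a definable unbounded $B\subseteq A$ deciding $\phi(x,\bar m)$ for \emph{all} tuples $\bar m$ simultaneously, with the decision definable in $\bar m$ --- is where the entire content of the theorem (and the use of full induction, via a pigeonhole argument on the finite traces $\{\bar m<b:\phi(a,\bar m)\}$) lives, and you name it without proving it. Your phrasing ``one of $\phi$, $\neg\phi$ can be added'' undersells the difficulty: the issue is not choosing a side for a single instance but doing so uniformly in the parameters.

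One side remark in your proposal is actually false and worth correcting: for uncountable $\mathcal{L}$ the theorem \emph{fails} --- Mills constructed a model of $\PA$ with full induction in an uncountable language having no elementary end extension at all --- so it is not the case that ``the splitting argument is unchanged'' under transfinite bookkeeping (the chain of definable refinements cannot be continued through uncountably many stages). The Fact as stated in the paper is likewise imprecise on this point, but harmlessly so, since every language it is applied to ($\mathcal{L}_{\PAT}$ and the like) is countable; you should add the countability hypothesis to your Step 1.
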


	\begin{defin}
	Let $M$ be any $\mathcal{L}$-structure. Let $X \subset M.$ We call $X$ a \emph{(proper) class} iff $X$ is not definable in $M$ (i.e. is not definable with parameters) but for any $a \in M$ the set $\{ x \in X \ | \ x<a \}$ is definable in $M.$
	\end{defin}

	\begin{fak}[Barwise-Schlipf-Ressayre] \label{bsr}
	Let $M$ be any recursively saturated countable model of a recursive theory $Th$ extending $\PA.$ Then for any finite tuple $\bar{a}\in M$, any recursive theory $Th'$ in a recursive language $\mathcal{L}'$ extending $\mathcal{L}\cup\{\bar{a}\}$ if $Th' + Th(M, \bar{a})$ is consistent, then $M$ admits an expansion to a model $M'$ of $Th'.$ The models $M$ with the above property for an arbitrary single sentence in place of recursive theory are called \textit{resplendent}.
	\end{fak}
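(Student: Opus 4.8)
A single $\mathcal{L}'$-sentence, together with the finitely many symbols it mentions, is itself a recursive theory in a recursive language, so the statement for recursive theories $Th'$ subsumes the version for a single sentence and hence gives resplendency; I plan to prove the recursive-$Th'$ version. The strategy is a Henkin construction in the countable language $\mathcal{L}'\cup C$, where $C=\{c_m: m\in M\}$ is a set of fresh constants naming the elements of $M$ (the constants $\mathcal{L}'$ already provides for $\bar a$ are tied to the matching $c_{a_i}$ by identity axioms placed into the theory at the start). Fixing an enumeration $\phi_0,\phi_1,\dots$ of all $\mathcal{L}'\cup C$-sentences, I would build an increasing chain $T_0\subseteq T_1\subseteq\dots$, each $T_n$ consisting of $Th'$ together with finitely many $\mathcal{L}'\cup C$-sentences, so that $T^{*}:=\bigcup_n T_n$ is a consistent, complete Henkin theory whose Henkin witnesses lie in $C$ and which decides every $\mathcal{L}$-sentence $\theta(c_{m_1},\dots,c_{m_k})$ in accordance with $M$. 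The term model of $T^{*}$, with each $c_m$ read as $m$, is then the desired expansion $M'\models Th'$.

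The invariant carried along the construction is an ``acceptability'' condition. If $T_n=Th'\cup\{\chi_1,\dots,\chi_r\}$, let $c_{m_1},\dots,c_{m_\ell}$ list the members of $C$ occurring in the $\chi_i$ and write $\chi(\bar y)$ for the conjunction of the $\chi_i$ with each $c_{m_j}$ replaced by a fresh variable $y_j$. Acceptability says:
\[(\star)\qquad Th'+\exists\bar y\,\bigl(\delta(\bar y)\wedge\chi(\bar y)\bigr)\ \text{ is consistent, for every $\mathcal{L}$-formula }\delta(\bar y)\text{ with }M\models\delta(m_1,\dots,m_\ell).\]
For $T_0$ this says $Th'+\exists\bar y\,\delta(\bar y)$ is consistent whenever $M\models\delta(\bar a)$, which holds since every model of $Th'+Th(M,\bar a)$ satisfies $\exists\bar y\,\delta(\bar y)$. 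Enlarging the parameter list by the constants of the next sentence $\phi_n$ preserves $(\star)$ (project the witnessing models along the new coordinates). To decide $\phi_n$: were neither $T_n\cup\{\phi_n\}$ nor $T_n\cup\{\neg\phi_n\}$ acceptable, one would get $\mathcal{L}$-formulas $\delta_0,\delta_1$ satisfied by the parameter tuple in $M$ with $Th'+\exists\bar y(\delta_0\wedge\chi\wedge\phi_n^{*})$ and $Th'+\exists\bar y(\delta_1\wedge\chi\wedge\neg\phi_n^{*})$ both inconsistent, where $\phi_n^{*}(\bar y)$ is $\phi_n$ with its constants replaced by variables; then $\delta:=\delta_0\wedge\delta_1$ together with the tautology $\phi_n^{*}\vee\neg\phi_n^{*}$ renders $Th'+\exists\bar y(\delta\wedge\chi)$ inconsistent, contradicting $(\star)$. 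Hence one of the two extensions is acceptable, and it is taken.

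The heart of the matter, and the step I expect to be the main obstacle, is the Henkin witness step; it is the only place recursive saturation and the countability of $M$ enter. Say $\phi_n=\exists x\,\psi(x)$ has just been added, so $\chi(\bar y)$ now carries a conjunct $\exists x\,\psi^{*}(\bar y,x)$, and one must adjoin $\psi(c_m)$ for a suitable $m\in M$ while keeping $(\star)$ for the tuple $(m_1,\dots,m_\ell,m)$ and the conjunction $\chi(\bar y)\wedge\psi^{*}(\bar y,v)$. Let $D$ be the set of $\mathcal{L}$-formulas $\delta(\bar y,v)$ with $Th'+\exists\bar y\exists v\,(\delta(\bar y,v)\wedge\chi(\bar y)\wedge\psi^{*}(\bar y,v))$ inconsistent; $D$ is recursively enumerable (search for a derivation of $\bot$), so $\{\neg\delta(\bar m,v):\delta\in D\}$ is a recursively enumerable type over $M$ with parameters $\bar m$, which I would pad into an equivalent recursive type $p(v)$. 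From $(\star)$ for the current $T_n$ one checks $p(v)$ is finitely satisfiable in $M$: if $M\models\forall v\bigvee_{i\le s}\delta_i(\bar m,v)$ with all $\delta_i\in D$, then instantiating $(\star)$ at the $\mathcal{L}$-formula $\forall v\bigvee_i\delta_i(\bar y,v)$ and using the conjunct $\exists x\,\psi^{*}(\bar y,x)$ of $\chi$ produces a model witnessing $Th'+\exists\bar y\exists v(\delta_i\wedge\chi\wedge\psi^{*})$ for some $i$, contradicting $\delta_i\in D$. By recursive saturation and countability there is $m\in M$ realising $p$, i.e.\ $M\models\neg\delta(\bar m,m)$ for all $\delta\in D$; since any $\mathcal{L}$-formula witnessing a failure of the new instance of $(\star)$ at $(\bar m,m)$ would have to belong to $D$ and yet be satisfied by $(\bar m,m)$ in $M$, no such formula exists, and $m$ is as required.

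Finally the pieces are collected. Instantiating $(\star)$ at $\delta:=$ ``$0=0$'' shows every $T_n$ is consistent, hence so is $T^{*}$ by compactness; by construction $T^{*}$ is complete and Henkin with witnesses in $C$. Running the construction through the sentences $c_m=c_{m'}$, through the identities for $\bar a$, and more generally through $\theta(c_{m_1},\dots,c_{m_j})$ for $\mathcal{L}$-sentences $\theta$, the acceptability test (applied with $\delta:=\theta$, respectively $\delta:=\neg\theta$) forces $T^{*}$ to decide each of these in agreement with $M$; if $\mathcal{L}'$ has function symbols one first Henkinises every closed $\mathcal{L}'$-term to some $c_m$. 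Therefore the $\mathcal{L}$-reduct of the term model of $T^{*}$ is isomorphic to $M$ via $c_m\mapsto m$ (and this isomorphism takes the constants for $\bar a$ to $\bar a$), while the whole term model satisfies $Th'$; that model is the desired $M'$.
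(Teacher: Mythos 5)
The paper states this as a Fact without proof, referring to \cite{Kaye}, and your argument is correct and essentially the standard proof given there: a Henkin construction over the countable language with constants for all of $M$, maintaining a consistency-with-the-type-of-the-parameters invariant, with recursive saturation supplying the Henkin witnesses inside $M$ via a padded recursively enumerable type. All the key steps (base case from consistency of $Th'+Th(M,\bar a)$, the decision step, the finite satisfiability of the witness type, and the identification of the term model's reduct with $M$) are handled correctly.
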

	
\section{Easy or classical results}\label{easy}

In this section we show how to prove almost all inclusions mentioned in the introduction. Let us begin with some trivial observations:

\begin{stw}
$\mathfrak{TB}\supseteq \mathfrak{UTB}$
\end{stw}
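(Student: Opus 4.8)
The plan is to observe that $\UTB$ \emph{syntactically} proves $\TB$, so that a single expansion $(M,T)$ of a given model $M \models \PA$ simultaneously witnesses membership in both $\mathfrak{UTB}$ and $\mathfrak{TB}$. First I would note that the two theories share literally the same induction axioms, namely $Ind(T)$ on top of $\PA$, so it is enough to derive the axioms of $\TB^-$ from those of $\UTB^-$.

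Next I would argue that every instance of the Tarski biconditional scheme is, under our coding conventions, an instance of the uniform Tarski biconditional scheme. Indeed, fix a standard sentence $\phi$ of $\mathcal{L}_{\PA}$. Since $\phi$ has no free variables, the associated tuple of terms $\bar{t}$ is empty, so $\phi(\bar{t})$ is just $\phi$ and $\phi(\bar{t}^{\circ})$ is just $\phi$ as well; hence the $\UTB^-$ axiom for $\phi$ reads $T\phi \equiv \phi$, which is exactly the $\TB^-$ axiom for $\phi$. (In the fully coded formulation this amounts to the trivial observation, provable in $\PA$, that substituting the empty sequence of terms into a code of a sentence returns that same code.) Consequently $\UTB^- \vdash \TB^-$, and therefore $\UTB \vdash \TB$.

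Finally I would conclude: if $M \in \mathfrak{UTB}$, choose an expansion $(M,T) \models \UTB$; by the previous paragraph $(M,T) \models \TB$, so $M \in \mathfrak{TB}$. There is essentially no obstacle to overcome here; the only point calling for a moment's care is the bookkeeping that, under the conventions fixed in Section~\ref{sect: not and def}, a formula with no free variables yields precisely the disquotational instance, and this is immediate once those conventions are in place.
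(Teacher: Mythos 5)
Your proposal is correct and matches the paper's argument, which simply notes that $\TB$ is a subtheory of $\UTB$; your elaboration of why the $\TB^-$ axioms are instances (or immediate consequences) of the $\UTB^-$ scheme for sentences is exactly the content of that one-line observation.
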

\begin{proof}
This follows immediately from the fact, that $\TB$ is a subtheory of $\UTB$.
\end{proof}

\begin{fak}\label{UTB to RS}
$\mathfrak{RS}\supseteq \mathfrak{UTB}$
\end{fak}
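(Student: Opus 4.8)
The plan is to read recursive saturation of $M$ straight off the uniform $T$-biconditionals together with overspill, exactly as in the countable case but with no appeal to countability (which is why the argument applies to arbitrary, possibly uncountable, $M$). So fix $M\models\PA$ with an expansion $(M,T)\models\UTB$, fix a finite tuple $\bar a\in M$, and let $p(v)=\{\phi_k(v,\bar y):k\in\omega\}$ be a recursive type in the displayed variable $v$ and parameter variables $\bar y$ that is finitely realized in $M$ under the assignment $\bar y\mapsto\bar a$. I want to produce a single $d\in M$ with $M\models\phi_k(d,\bar a)$ for every $k\in\omega$. Since $p$ is recursive, the map $k\mapsto\ulcorner\phi_k(v,\bar y)\urcorner$ is recursive, hence representable in $\PA$: there is a provably total function $\sigma$ with $\PA\vdash\sigma(\num k)=\num{\ulcorner\phi_k(v,\bar y)\urcorner}$ for every standard $k$.

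Inside $(M,T)$ consider the $\mathcal{L}_{\PAT}$-formula
$$\Psi(n)\ :\equiv\ \exists v\ \forall k\le n\ T\bigl(\sigma(k)(\num v,\num{\bar a})\bigr),$$
where, in the paper's loose notation for syntactic operations, $\sigma(k)(\num v,\num{\bar a})$ denotes the code of the sentence obtained by substituting the numerals of (the value of) $v$ and of $\bar a$ into the formula coded by $\sigma(k)$. First I check $(M,T)\models\Psi(n)$ for every $n\in\omega$: by finite realizability there is $c\in M$ with $M\models\phi_k(c,\bar a)$ for all $k\le n$, and for each such $k$ the instance of the $\UTB^-$ scheme for $\phi_k(v,\bar y)$, instantiated at the numerals $\num c,\num{\bar a}$ (which evaluate to $c$ and $\bar a$), gives $(M,T)\models T(\phi_k(\num c,\num{\bar a}))\equiv\phi_k(c,\bar a)$, so $(M,T)\models T(\phi_k(\num c,\num{\bar a}))$; since $\PA\vdash\sigma(\num k)=\num{\ulcorner\phi_k\urcorner}$, the value $v:=c$ witnesses $\Psi(n)$.

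Now apply the Overspill Lemma (Fact \ref{overspill}) to $\Psi$ in the structure $(M,T)$ --- legitimate precisely because $\UTB$ contains $Ind(T)$, so $(M,T)\models Ind(\Psi)$. This yields a nonstandard $c^*\in M$ with $(M,T)\models\Psi(c^*)$, hence some $d\in M$ with $(M,T)\models\forall k\le c^*\ T(\sigma(k)(\num d,\num{\bar a}))$. For every standard $n$ we have $n<c^*$, so $(M,T)\models T(\sigma(\num n)(\num d,\num{\bar a}))$, i.e.\ $(M,T)\models T(\phi_n(\num d,\num{\bar a}))$ by $\PA\vdash\sigma(\num n)=\num{\ulcorner\phi_n\urcorner}$; applying the $\UTB^-$ biconditional for $\phi_n$ with $\num d,\num{\bar a}$ once more gives $M\models\phi_n(d,\bar a)$. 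As $n\in\omega$ was arbitrary, $d$ realizes $p$; since $M$, $\bar a$ and $p$ were arbitrary, $M$ is recursively saturated, which is $\mathfrak{RS}\supseteq\mathfrak{UTB}$.

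The one delicate point is the coding bookkeeping: one needs $\PA$ itself, not just its standard model, to prove that $\sigma(\num k)$ codes $\phi_k$ --- this is exactly the content of "recursive $\Rightarrow$ $\PA$-representable" --- and one must make sure $\Psi$ is literally an $\mathcal{L}_{\PAT}$-formula so that it falls under $Ind(T)$; this is the single step of the proof that genuinely needs the full induction of $\UTB$ rather than merely $\UTB^-$. Everything else is routine manipulation of numerals and of the uniform biconditionals.
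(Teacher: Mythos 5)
Your proof is correct and follows essentially the same route as the paper: express finite realizability of the recursive type via the uniform biconditionals and the truth predicate, overspill on the resulting $\mathcal{L}_{\PAT}$-formula using $Ind(T)$, and read off a realizing element by applying the biconditionals once more. The only cosmetic difference is that you index the type by a representable enumeration $\sigma(k)$ where the paper quantifies over all codes $\psi<k$ satisfying a representing formula $\phi(\psi)$; these are interchangeable.
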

\begin{proof}
Fix any model $(M,T)\models \UTB$ and a recursive type $p(x, \bar{a})$ over $M$ consisting of arithmetic formulae with parameters $a_1, \ldots, a_n \in M$. Let $\phi(z)$ represent $p(x,y_1, \ldots, y_n)$ in $\PA$. Since $p(x,a_1, \ldots, a_n)$ is finitely satisfied in $M$, for all $k\in \mathbb{N}$
$$(M,T) \models \exists c  \forall \psi<k \ \  \Bigl( \phi(\psi) \longrightarrow T \psi(\num{c}, \num{a_1}, \ldots, \num{a_n} )\Bigr).$$
Hence, by overspill, there is a nonstandard $b\in M$ and a $c\in M$ s.t.
$$M \models \forall \psi<b  \ \ \Bigl( \phi(\psi) \longrightarrow T \psi(\num{c}, \num{a_1}, \ldots, \num{a_n}) \Bigr).$$
Which proves the thesis.
\end{proof}

In proving the inclusion $\mathfrak{TB}\supset \mathfrak {RS}$, we will need the unpublished characterization of $\mathfrak{TB}$, which has been found independently by Fredrik Engstr\"{o}m and Cezary Cieśliński. 

\begin{stw}\label{Engs-Ciesl}
$M \in \mathfrak{TB}$ if and only if the set
$$Th_{\mathcal{L}_{\PA}}(M) = \{\phi \in \mathbb{N} \ \ | \ \ \phi \in \Sent_{\mathcal{L}_{\PA}} \textrm{ and } M \models \phi\}$$
is coded in $M$.
\end{stw}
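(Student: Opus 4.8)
The plan is to prove both implications directly. For the easy direction, suppose $M \in \mathfrak{TB}$, say $(M,T) \models \TB^-$. Then the set $X = \{\phi \in \Sent_{\mathcal{L}_{\PA}}(M) \ | \ (M,T) \models T\phi\}$ is of course a subset of $M$ coded by a single element of $M$ only if it is... wait — actually the point is subtler: $X$ need not be coded. Instead, I would argue that $Th_{\mathcal{L}_{\PA}}(M)$, as a set of \emph{standard} sentences, coincides with $\{\phi \in \omega \ | \ (M,T)\models T\phi\}$ by the Tarski biconditionals (each standard biconditional $T\phi \equiv \phi$ is an axiom), and then one needs to code it. So the correct easy direction is the \emph{converse}: if $Th_{\mathcal{L}_{\PA}}(M)$ is coded by some $c \in M$, define $T := \{x \in M \ | \ M \models x \in c\}$, i.e. use the coded set directly as the interpretation of the truth predicate. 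Then for each standard sentence $\phi$, the biconditional $T\phi \equiv \phi$ holds because $\phi \in c$ (in the sense of $M$'s coding) iff $M \models \phi$, by choice of $c$; here one uses that for standard $\phi$ the coded membership $M \models \phi \in c$ is decided ``correctly''. This gives $(M,T)\models \TB^-$, so $M \in \mathfrak{TB}$.

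For the harder direction, assume $(M,T) \models \TB^-$ and I must produce a code for $Th_{\mathcal{L}_{\PA}}(M)$. The key observation is that $\TB^-$ proves, for each \emph{fixed} standard $n$, the sentence $\forall \phi \leq n\, (T\phi \equiv \phi^*)$ where $\phi^*$ denotes the arithmetical satisfaction of $\phi$ restricted to sentences of complexity $\leq n$ — but more simply, for each standard $n$ there is a partial truth predicate $\Tr_n(x)$ definable in $\mathcal{L}_{\PA}$ (a $\Sigma_n$ truth definition) such that $\PA \vdash \forall \phi\,(\mathrm{depth}(\phi)\leq n \rightarrow (\Tr_n(\phi) \leftrightarrow \ldots))$ and, crucially, $\TB^- \vdash \forall \phi\, ( \phi \leq n \wedge \Sent(\phi) \rightarrow (T\phi \leftrightarrow \Tr_n(\phi)))$ is \emph{not} a theorem since $n$ varies — instead this holds in the model: for each standard $n$, $(M,T)$ satisfies $\forall \phi < n\,(\Sent(\phi) \rightarrow (T\phi \leftrightarrow \Tr_{k(n)}(\phi)))$ because it holds for each of the finitely many standard sentences below $n$ and these are all the sentences $<n$ when $n$ is standard. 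Then I would apply overspill: the formula $\theta(x) :\equiv \forall \phi < x\,(\Sent(\phi)\rightarrow(T\phi \leftrightarrow \Tr(\phi, x)))$ — using a single $\mathcal{L}_{\PA}$-formula $\Tr(\phi,x)$ that is a ``truth predicate for sentences of Gödel code below $x$'', which exists by the $\Sigma_n$-truth-definition hierarchy being uniformly definable with the bound as parameter — holds for all standard $x$ in $(M,T)$, hence by overspill (Fact~\ref{overspill}, using $Ind(T)$ — note $\TB$ has full induction) it holds for some nonstandard $b$. Then $c := \{\phi < b \ | \ M \models \Tr(\phi, b)\}$ is a code (it exists by $\PA$-comprehension on the arithmetical formula $\Tr(\cdot, b)$), and $c \cap \omega = Th_{\mathcal{L}_{\PA}}(M)$, since for standard $\phi$ we have $\phi \in c$ iff $M \models \Tr(\phi,b)$ iff $(M,T)\models T\phi$ iff $M \models \phi$.

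The main obstacle I anticipate is the availability of the uniform partial truth predicate $\Tr(\phi,x)$ with the right properties: I need a single arithmetical formula, with the bound $x$ as a parameter, that correctly evaluates all sentences of Gödel code below $x$, including sentences of nonstandard syntactic complexity when $x$ is nonstandard — this is delicate because there is no arithmetical full truth predicate, but it works because bounding the \emph{code} $\phi < x$ bounds the logical complexity in a way $\PA$ can handle via the usual stratified truth definitions, and the parameter $x$ lets us pick the right stratum internally. I would need to cite or sketch the standard construction of such partial truth predicates (as in Kaye's book) and verify that the overspill step legitimately transfers the standard-$n$ statements to a nonstandard bound, which requires that $\theta(x)$ is a genuine $\mathcal{L}_{\PAT}$-formula to which induction applies — this is fine since $\TB$ includes $Ind(T)$, though if one wants the result for $\TB^-$ alone a little more care is needed and in fact the statement as given is about $\mathfrak{TB}$, i.e. expandability to $\TB$, so full induction is available. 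A secondary subtlety is confirming that for \emph{standard} $\phi$ the coded-membership and actual satisfaction agree in both directions — this is immediate but worth stating explicitly.
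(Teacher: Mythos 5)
Your right-to-left direction coincides with the paper's: interpret $T$ as the set coded by $c$. One small point you should add: $\mathfrak{TB}$ is defined via expandability to $\TB = \TB^- \cup Ind(T)$, not merely $\TB^-$, so you must also verify the induction axioms for $\mathcal{L}_{\PAT}$; this is immediate because your $T$ is definable in $M$ with the parameter $c$, so every instance of $Ind(T)$ reduces to an instance of parametric induction in $\mathcal{L}_{\PA}$.

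The left-to-right direction, however, rests on an object that does not exist. You ask for a single $\mathcal{L}_{\PA}$-formula $\mathrm{Tr}(\phi,x)$ which, for every $x$, correctly evaluates all sentences of G\"odel code below $x$ (at least for standard $\phi$ and $x$). If such a formula existed, then $\tau(\phi) := \mathrm{Tr}(\phi,\phi+1)$ would be an arithmetical truth definition for all standard sentences, contradicting Tarski's undefinability theorem. The stratified truth predicates $\mathrm{Tr}_n$ for $\Sigma_n$-sentences are each definable, but $\mathrm{Tr}_n$ has quantifier complexity growing with $n$, and they cannot be amalgamated into one formula taking $n$ as a parameter; ``the parameter $x$ lets us pick the right stratum internally'' is precisely the step Tarski's theorem forbids. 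Consequently your $\theta(x)$ is not a single formula of any fixed language, and overspill cannot be applied to it. The repair is to overspill a formula that uses the predicate $T$ itself instead of an arithmetical surrogate: for every standard $n$,
$$(M,T) \models \exists y\ \forall \phi < n\ \bigl(\phi \in y \equiv T\phi\bigr),$$
simply because $\{\phi < n \ : \ (M,T)\models T\phi\}$ is then a finite set of standard numbers and hence coded. This is a genuine $\mathcal{L}_{\PAT}$-formula and $\TB$ contains $Ind(T)$, so overspill yields a nonstandard $b$ and a $c$ with $\forall \phi<b\ (\phi\in c\equiv T\phi)$; for standard sentences $\phi$ the Tarski biconditionals then give $\phi\in c\equiv\phi$, so $c$ codes $Th_{\mathcal{L}_{\PA}}(M)$. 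This is the paper's argument, and it bypasses the partial-truth-predicate machinery entirely.
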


\begin{proof} Fix any model $M \models \TB$.\\
($\Rightarrow$) Observe that for all $n\in \mathbb{N}$, $$(M, T)\models  \exists x \forall \phi<n \ \ \bigl(\phi \in x \equiv T\phi\bigr)$$
and use overspill to find a code of the theory of $M$.\\
($\Leftarrow$) Take $T = \{a\in M\ \ | \ \ M\models a\in c\}$, where $c$ is the code of $Th_{\mathcal{L}_{\PA}}(M)$.
\end{proof}

\begin{cor}
$\mathfrak{TB}\supseteq \mathfrak{RS}$ 
\end{cor}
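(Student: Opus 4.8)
The plan is to deduce this corollary from Proposition \ref{Engs-Ciesl} together with the basic coding properties of recursively saturated models. By Proposition \ref{Engs-Ciesl}, it suffices to show that whenever $M$ is a recursively saturated model of $\PA$, the set $Th_{\mathcal{L}_{\PA}}(M)$ is coded in $M$, i.e. there is some $c \in M$ such that $\{\phi \in \mathbb{N} \ | \ \phi \in \Sent_{\mathcal{L}_{\PA}} \textrm{ and } M \models \phi\} = \{a \in M \ | \ M \models a \in c\}$, where $\in$ is the definable membership relation coming from, say, the Ackermann or Gödel $\beta$-function coding in $\PA$.

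First I would write down, for each standard $n$, the formula $\theta_n(x)$ expressing ``for all $\phi < n$, $\phi \in x$ if and only if $\phi$ is (the code of) a true sentence'', where truth of $\phi$ is spelled out by the arithmetical partial truth predicate $\Tr_n$ for sentences of syntactic complexity below $n$ (these partial truth predicates exist in $\PA$ by a standard result). The key observation is that the type $p(x) = \{\theta_n(x) \ | \ n \in \omega\}$ is recursive and is finitely satisfiable in $M$: given finitely many of its formulas, bounded by some $n$, one can in $M$ explicitly build the finite set coding exactly the true sentences below $n$ using the partial truth predicate $\Tr_n$, and $M$ proves that this set satisfies $\theta_n$. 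Hence $p(x)$ is a recursive type over $M$; by recursive saturation there is $c \in M$ realizing it. Then for every standard sentence $\phi$, picking $n > \phi$, we get $M \models (\phi \in c \leftrightarrow \Tr_n(\phi))$, and since $\Tr_n$ correctly decides the truth in $M$ of standard sentences of complexity below $n$, we conclude $\phi \in c$ in the sense of $M$ iff $M \models \phi$. Thus $c$ codes $Th_{\mathcal{L}_{\PA}}(M)$, and by Proposition \ref{Engs-Ciesl}, $M \in \mathfrak{TB}$.

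The main point requiring care is the verification that the type really is recursive and really is finitely satisfiable --- that is, confirming that the partial truth predicates $\Tr_n$ behave as required (provably in $\PA$, $\Tr_n$ satisfies the compositional Tarski clauses for sentences below complexity $n$, and agrees externally on standard sentences), so that the finite approximations $\theta_n$ are simultaneously realizable by a genuine element of $M$. None of this needs countability of $M$: recursive saturation of $M$ alone suffices to realize a recursive type in finitely many variables. This gives $\mathfrak{TB} \supseteq \mathfrak{RS}$; the strictness of this inclusion, i.e. $\mathfrak{TB} \supset \mathfrak{RS}$, is the content of the original construction in Section \ref{easy} of a model of $\TB$ that is not recursively saturated (equivalently, does not admit a $\UTB$ expansion), and is not part of this corollary.
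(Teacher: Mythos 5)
Your proof is correct and takes essentially the same route as the paper: reduce to Proposition \ref{Engs-Ciesl} and use recursive saturation to realize a recursive type whose realization codes $Th_{\mathcal{L}_{\PA}}(M)$. The paper's type is simply $\{\phi \equiv \phi\in x \ | \ \phi \in \Sent_{\mathcal{L}_{\PA}}\}$, whose finite satisfiability is immediate from coding of finite sets, so the detour through the arithmetical partial truth predicates $\mathrm{Tr}_n$ is not needed, but this difference is only cosmetic.
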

\begin{proof}
Suppose that $M$ is recursively saturated and consider the following recursive type with a free variable $x$:
$$\{\phi \equiv \phi\in x \ \ | \ \ \phi \in \Sent_{\mathcal{L}_{\PA}} \}.$$
Any element of $M$ realizing this type will be a code of the theory of $M$. Hence by Proposition $\ref{Engs-Ciesl}$ the model $M$ can be extended to a model of $\TB$.
\end{proof}

\begin{cor}[Cieśliński, Engstr\"om]
$\mathfrak{TB} \subset \mathfrak{PA}.$
\end{cor}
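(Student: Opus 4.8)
The plan is as follows. The inclusion $\mathfrak{TB}\subseteq\mathfrak{PA}$ is immediate because $\TB$ is an extension of $\PA$, so only strictness requires work: we must produce a single model $M\models\PA$ with $M\notin\mathfrak{TB}$. By Proposition \ref{Engs-Ciesl} this reduces to finding $M\models\PA$ in which $Th_{\mathcal{L}_{\PA}}(M)$ is not coded, and the candidate I would use is a \emph{conservative} elementary extension of the standard model $\mathbb{N}$: conservativity is precisely what stops new --- in particular non-arithmetical --- subsets of $\mathbb{N}$ from becoming coded.

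First I would apply the MacDowell--Specker theorem (Fact \ref{mds}), with $Th=\PA$ and the model $\mathbb{N}$, to obtain an elementary extension $\mathbb{N}\prec_{cons}M$ with $M\models\PA$; since a conservative extension is in particular a proper end extension, $M$ is nonstandard. As the extension is elementary, $Th_{\mathcal{L}_{\PA}}(M)=Th_{\mathcal{L}_{\PA}}(\mathbb{N})$, i.e.\ it is (the set of codes of) true arithmetic. Suppose now, toward a contradiction, that some $c\in M$ codes this set. Then
$$X=\set{a\in M}{M\models a\in c \wedge \Sent_{\mathcal{L}_{\PA}}(a)}$$
is definable in $M$ with the parameter $c$, so by conservativity $X\cap\mathbb{N}$ is definable in $\mathbb{N}$, hence arithmetical. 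On the other hand, unwinding the meaning of ``$c$ codes $Th_{\mathcal{L}_{\PA}}(M)$'' together with $\mathbb{N}\prec M$ shows that $X\cap\mathbb{N}$ is exactly $Th_{\mathcal{L}_{\PA}}(\mathbb{N})$, which is not arithmetical by Tarski's undefinability-of-truth theorem --- a contradiction. Hence $Th_{\mathcal{L}_{\PA}}(M)$ is not coded in $M$, so $M\notin\mathfrak{TB}$ by Proposition \ref{Engs-Ciesl}, and therefore $\mathfrak{TB}\subsetneq\mathfrak{PA}$.

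The proof is short, and the only genuinely delicate point --- the step I expect to need the most care --- is the bookkeeping in the last display: one must keep the codes of honest arithmetical sentences apart from the nonstandard ``pseudo-sentences'' that $M$ regards as sentences, and fix the precise reading of ``$Th_{\mathcal{L}_{\PA}}(M)$ is coded in $M$'' (namely, that the Ackermann extension of $c$ agrees with true arithmetic on standard sentences). One is tempted to take $M=\mathbb{N}$ itself, since no standard number can code the infinite set $Th_{\mathcal{L}_{\PA}}(\mathbb{N})$; passing instead to a nonstandard conservative extension has the advantage of not relying on whether Proposition \ref{Engs-Ciesl} is intended to cover the degenerate case $M=\mathbb{N}$.
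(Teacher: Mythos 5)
Your proof is correct, but it takes a genuinely different route from the paper's. The paper takes $K$ to be the prime model of a complete extension $Th \neq Th(\mathbb{N})$ of $\PA$: since every element of $K$ is arithmetically definable without parameters, an element $c$ coding $Th_{\mathcal{L}_{\PA}}(K)$ would turn $x \in c$ into a parameter-free arithmetical truth definition for $Th$ inside $K$, contradicting Tarski's theorem; it then concludes via Proposition \ref{Engs-Ciesl} exactly as you do. You instead take a MacDowell--Specker conservative elementary extension $M$ of $\mathbb{N}$ and use conservativity to push a putative code of $Th_{\mathcal{L}_{\PA}}(M) = Th(\mathbb{N})$ down to an arithmetical definition of true arithmetic. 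Both arguments are one application of Proposition \ref{Engs-Ciesl} plus Tarski's theorem; they differ in which preliminary Fact they invoke (the Prime models fact vs.\ Fact \ref{mds}) and in which models they exhibit outside $\mathfrak{TB}$: the paper's witnesses are prime models of false completions of $\PA$, while yours is a nonstandard model of true arithmetic. Your closing caution is well placed: the left-to-right direction of Proposition \ref{Engs-Ciesl} is proved by overspill and genuinely requires nonstandardness --- indeed $(\mathbb{N}, Th(\mathbb{N})) \models \TB$ even though the infinite set $Th(\mathbb{N})$ is not coded in $\mathbb{N}$ --- so taking $M = \mathbb{N}$ itself would not work, and your passage to a proper conservative extension is exactly what makes the argument go through.
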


	\begin{proof}
		Take any prime model $K$ of a complete extension $Th \neq Th(\mathbb{N})$ of $\PA.$ Suppose that $K$ admits an expansion to a model 
		$$(K,T) \models \TB.$$ 
But then by Proposition \ref{Engs-Ciesl} there would be an element $c \in K$ which codes the theory $Th.$  Since all elements of $K$ are arithmetically definable without parameters, the formula
		$$x \in c$$
would then yield an arithmetical definition of truth for $Th,$ contradicting Tarski's theorem.		
	\end{proof}
	
Now we proceed to the construction of a model which codes its theory and is not recursively saturated, in this way proving that the inclusion $\mathfrak{TB}\supseteq \mathfrak{RS}$ is strict. It shows up that this is an easy consequence of MacDowell-Specker theorem and the following lemma:

\begin{lem}\label{nsr}
 Suppose that  $M \prec_{cons} M'$ i.e. $M'$ is an elementary, conservative end extension of $M$. Then $M'$ is not recursively saturated.
\end{lem}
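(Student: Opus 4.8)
The plan is to argue by contradiction: I will assume that $M'$ is recursively saturated and extract from this a definable ``truth predicate'' living in $M$, which contradicts Tarski's undefinability theorem.

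The first step uses recursive saturation to code the arithmetical theory of $M$ inside $M'$. Fixing a recursive enumeration $\phi_0,\phi_1,\dots$ of the standard sentences of $\mathcal{L}_{\PA}$, the type $\{\,x\text{ codes a finite set}\,\}\cup\{\,\num{\ulcorner\phi_n\urcorner}\in x\leftrightarrow\phi_n : n\in\omega\,\}$ is recursive and finitely realized in $M'$ (a finite fragment is realized by a code of the finite set of those relevant sentences true in $M'$). Hence some $c\in M'$ realizes it, so that $M'\models\num{\ulcorner\phi\urcorner}\in c\leftrightarrow\phi$ for every standard sentence $\phi$; since $M\prec M'$ this gives $M\models\phi \iff M'\models\num{\ulcorner\phi\urcorner}\in c$.

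The second step uses conservativity. The set $\{x\in M' : M'\models x\in c\}$ is definable in $M'$ (with parameter $c$), so by $M\prec_{cons}M'$ its intersection with $M$ is definable in $M$, say by a formula $\delta$ (a priori with parameters from $M$). As each numeral $\num{\ulcorner\phi\urcorner}$ lies in $M$, we obtain $M\models\delta(\num{\ulcorner\phi\urcorner})\leftrightarrow\phi$ for every standard sentence $\phi$; that is, $\delta$ is an $M$-definable partial truth predicate that is correct on all standard sentences. The final step is then to derive a contradiction via the diagonal lemma: one builds a liar sentence $\lambda$ with $\PA\vdash\lambda\leftrightarrow\neg\delta(\num{\ulcorner\lambda\urcorner})$, so that $M\models\lambda\leftrightarrow\neg\delta(\num{\ulcorner\lambda\urcorner})$, whereas the previous step yields $M\models\delta(\num{\ulcorner\lambda\urcorner})\leftrightarrow\lambda$.

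The step I expect to be the genuine obstacle is precisely this last one: $\delta$ may carry parameters from $M$, and then the naive liar sentence is no longer a standard sentence, so correctness of $\delta$ on it is not guaranteed and the argument does not close. This is exactly where the hypothesis that the extension is an \emph{end} extension (not merely conservative) must be used essentially: for instance, by exploiting that every parameter from $M$ lies below every element of $M'\setminus M$, one should be able to replace $\delta$ by a partial truth definition that is correct on all sentences coded below a fixed nonstandard element of $M'$ (or to arrange for $c$ to code a \emph{compositional} truth predicate rather than just the set of true sentences, pushing this compositionality down to $M$), and then the diagonal argument goes through with parameters. Everything other than handling the parameters — the passage from recursive saturation to the coded object, and from conservativity to its $M$-definable trace — is routine.
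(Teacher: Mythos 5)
Your skeleton coincides with the paper's (recursive saturation produces a coded object in $M'$, conservativity pulls its trace down to a parametrically definable set in $M$, Tarski's theorem gives the contradiction), but, as you yourself flag, the argument stops exactly where the work begins, and the unfinished step is not a technicality --- it is the whole content of the lemma. A parametrically definable set deciding all \emph{standard, parameter-free} sentences correctly is no contradiction at all: any model coding its own $\mathcal{L}_{\PA}$-theory by an element $c$ has one, namely $\{x : x\in c\}$, and such models exist in abundance (this is precisely the characterization of $\mathfrak{TB}$ in Proposition \ref{Engs-Ciesl}). So your first two steps establish something true of every model of $\TB$, and no diagonal argument can refute it: the liar sentence built from your $\delta$ carries the parameters of $\delta$ and hence is not among the sentences on which $\delta$ is known to be correct.

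The missing idea is to strengthen the type realized in $M'$ so that it handles standard \emph{formulas evaluated at arbitrary tuples}, and the end-extension hypothesis must enter there, at the start, not afterwards as a patch. The paper fixes $c\in M'\setminus M$ and has $b$ realize the recursive type (with parameter $c$) consisting, for each standard $\phi(\bar z)\in\Form_{\mathcal{L}_{\PA}}$, of the formula $\forall\bar x\,\bigl(\phi(\num{\bar x})<c\rightarrow(\phi(\bar x)\equiv\phi(\num{\bar x})\in y)\bigr)$. Finite satisfiability holds because the relevant set of substitution instances is a \emph{bounded} definable set, hence coded in $M'$; and since $M\subset_{end}M'$, every tuple from $M$ meets the bound. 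Consequently $\{a\in M: M'\models a\in b\}$, which conservativity makes parametrically definable in $M$, contains the code of $\phi(\num{\bar a})$ exactly when $M\models\phi(\bar a)$ for \emph{all} standard $\phi$ and \emph{all} $\bar a\in M$ --- that is, it is the full elementary diagram of $M$. Against this, the parametric diagonal argument does close: the liar formula $\sigma(\bar y)$ with $\PA\vdash\sigma(\bar y)\equiv\neg\delta(\sigma(\num{\bar y}))$ is standard, and evaluating it at the parameters of $\delta$ yields the contradiction. So your instinct about where end-extension is needed is right, but neither of your sketched repairs is carried out, and the first one (a truth definition ``correct on all sentences below a nonstandard element'') is not directly expressible as a recursive type; correctness of a coded set for standard formulas at tuples whose substitution instances lie below $c$ is, and that is the formulation that works.
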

\begin{proof}
Let $M$, $M'$ be as in the formulation of the lemma and suppose that $M'$ is recursively saturated. Pick $c\in M'\setminus M$ and let $b\in M'$ realize the following recursive type with the free variable $y$:
$$\{\forall \bar{x}  \ \ \Bigl(\phi(\bar{x}) < c \longrightarrow (  \phi(\bar{x}) \equiv (\phi(\num{\bar{x}}) \in y) \Bigr) \ \ | \ \ \phi(\bar{z}) \in \Form_{\mathcal{L}_{\PA}} \}.$$

Consider the set
$$ X= \{ a \in M \ | \ M' \models a \in b \}.$$
Since the extension $M \prec M'$ is conservative, $X$ should be definable with parameters from $M.$ Then by definition of $b$ the elementary diagram of $M$ would also be definable in $M$, which contradicts Tarski's Theorem.

\end{proof}

\begin{thm} \label{tb a rs}
	$\mathfrak{TB} \supset \mathfrak{RS}.$ Moreover, every model $M$ has an elementary extension to $(M',T) \models \TB$ with $M'$ not recursively saturated.
\end{thm}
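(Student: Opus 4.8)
The plan is to combine MacDowell–Specker (Fact~\ref{mds}) with the characterisation of $\mathfrak{TB}$ given in Proposition~\ref{Engs-Ciesl} and with Lemma~\ref{nsr}. Start with an arbitrary $M \models \PA$. We first want to replace $M$ by an elementary extension that codes its own $\mathcal{L}_{\PA}$-theory; the cleanest way is to pass through a model with full induction in an expanded language so that Fact~\ref{mds} applies. So first I would take a conservative elementary end extension $M \prec_{cf}$ (no --- rather) a model $N$ of $\PA$ together with a satisfaction-style predicate, or, more simply, recall that in any recursively saturated elementary extension of $M$ the theory $Th_{\mathcal{L}_{\PA}}(M)=Th_{\mathcal{L}_{\PA}}(N)$ is coded (by the argument in the Corollary after Proposition~\ref{Engs-Ciesl}). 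But we must be careful: we are not allowed to assume $M$ itself is recursively saturated, and we want the final model to be \emph{not} recursively saturated, so we cannot just take a recursively saturated elementary extension and stop there.

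Here is the route I would actually take. First, by a routine compactness/elementary-chain argument (or by invoking resplendency after first passing to a recursively saturated extension), build an elementary extension $M \prec N$ such that $N$ codes $Th_{\mathcal{L}_{\PA}}(N) = Th_{\mathcal{L}_{\PA}}(M)$; concretely, take any recursively saturated $N \succ M$ and use the Corollary to Proposition~\ref{Engs-Ciesl}. Then apply MacDowell–Specker (Fact~\ref{mds}, in the language $\mathcal{L}_{\PA}$) to obtain $N \prec_{cons} M'$, a conservative elementary end extension of $N$ which is again a model of $\PA$. Now $M'$ still codes its theory: since $M' \succ N$ is elementary we have $Th_{\mathcal{L}_{\PA}}(M') = Th_{\mathcal{L}_{\PA}}(N)$, and the code $c \in N \subseteq M'$ of that theory is still a code of $Th_{\mathcal{L}_{\PA}}(M')$ inside $M'$ (being a code of a set of standard sentences is an absolute enough property, as the Engström–Cieśliński characterisation only talks about which standard $\phi$ lie in the coded set). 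Hence by Proposition~\ref{Engs-Ciesl}, $M'$ expands to a model $(M',T) \models \TB$. Finally, $M' $ is a conservative \emph{end} extension of $N$ with $M' \neq N$, so by Lemma~\ref{nsr} $M'$ is not recursively saturated. Composing, $M \prec M'$, $(M',T)\models \TB$, and $M'$ is not recursively saturated, which is exactly the ``moreover'' clause; and since any recursively saturated model is in $\mathfrak{TB}$ (the earlier Corollary) while $M'$ witnesses a model in $\mathfrak{TB}\setminus\mathfrak{RS}$, the inclusion $\mathfrak{TB}\supset\mathfrak{RS}$ is strict.

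The main obstacle I anticipate is the bookkeeping in the first step: producing an elementary extension $N \succ M$ that codes its own theory without already being forced to be recursively saturated later, and checking that ``being coded'' survives passing to the conservative end extension $M'$. The first point is handled by simply allowing $N$ to be recursively saturated (that is fine; it is $M'$, not $N$, that must fail to be recursively saturated), and the second point is immediate once one notes that $Th_{\mathcal{L}_{\PA}}$ is preserved under elementary extension and that Proposition~\ref{Engs-Ciesl}'s notion of ``$c$ codes $Th_{\mathcal{L}_{\PA}}(M')$'' refers only to standard-sentence membership, which $c$ continues to witness in $M'$. Everything else --- MacDowell–Specker giving a conservative (hence end) proper elementary extension, and Lemma~\ref{nsr} killing recursive saturation --- is off the shelf.
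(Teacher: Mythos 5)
Your proof is correct and follows essentially the same route as the paper: obtain an elementary extension of $M$ that codes its own $\mathcal{L}_{\PA}$-theory, pass to a conservative elementary end extension via MacDowell--Specker, note the code survives, and apply Proposition~\ref{Engs-Ciesl} together with Lemma~\ref{nsr}. The only (immaterial) difference is in the first step, where the paper gets the coding model by a direct compactness argument with a fresh constant $c$ rather than by passing to a recursively saturated elementary extension.
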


\begin{proof}
We prove the ''moreover'' part which of course suffices. Let us fix any $M$. Let $c$ be a fresh constant. By compactness the following theory
$$ElDiag(M)\cup \{\phi \in c \ \ | \ \ \phi \in \Sent_{\mathcal{L}_{\PA}} \wedge M\models \phi\}$$
has a model $M',$ which is an elementary extension of $M$. Note that $c$ is a code of the theory of $Th_{\mathcal{L}_{\PA}}(M')$. Using MacDowell-Specker theorem (Theorem \ref{mds}) we can find 
$$M'' \succ_{cons} M'.$$
 Since $Th(M') = Th(M''),$ we see that $c\in M''$ is a code of a theory of $M''$. By Proposition \ref{Engs-Ciesl} $M''$ can be expanded to a model $(M'',T) \models \TB$. But by lemma \ref{nsr} it cannot be recursively saturated.
\end{proof}

As a corollary we obtain a solution to a problem from \cite{fujimoto}. 

\begin{cor} [Nicolai]
$\UTB$ is not relatively interpretable in $\TB,$ i.e. there is no formula $\phi(x)\in\mathcal{L}_{\PAT}$ defining in every model $(M,T) \models \TB$ a subset $S$ such that $(M,S) \models \UTB.$ 
\end{cor}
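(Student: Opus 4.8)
The plan is to derive this as a direct consequence of Theorem \ref{tb a rs} together with Fact \ref{UTB to RS}, via a model-theoretic argument: relative interpretability of $\UTB$ in $\TB$ would force the model classes to stand in the inclusion $\mathfrak{TB} \subseteq \mathfrak{UTB}$ (more precisely, it would force every $\PA$-reduct of a model of $\TB$ to admit an expansion to $\UTB$), which we have just shown fails. So the key observation is that a relative interpretation of $\UTB$ in $\TB$ is required \emph{not} to move the arithmetical part of the model, i.e.\ the formula $\phi(x)$ defining the new truth predicate lives in $\mathcal{L}_{\PAT}$ and does not reinterpret $\mathcal{L}_{\PA}$ — this is exactly what is stipulated in the statement of the corollary.

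The steps, in order, would be: First, suppose toward a contradiction that there is a formula $\phi(x) \in \mathcal{L}_{\PAT}$ such that for every $(M,T) \models \TB$, setting $S := \{ a \in M \ | \ (M,T) \models \phi(a) \}$ yields $(M,S) \models \UTB$. Second, invoke the ``moreover'' part of Theorem \ref{tb a rs}: take any nonstandard model and obtain an elementary extension to some $(M',T) \models \TB$ with $M'$ \emph{not} recursively saturated. (One may start, e.g., from any model of $\PA$ at all; the construction in Theorem \ref{tb a rs} produces a $\TB$-model whose reduct is not recursively saturated.) Third, apply the hypothetical interpretation to $(M',T)$ to get a set $S \subseteq M'$ with $(M',S) \models \UTB$; this witnesses $M' \in \mathfrak{UTB}$. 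Fourth, apply Fact \ref{UTB to RS}, which gives $\mathfrak{UTB} \subseteq \mathfrak{RS}$, to conclude that $M'$ is recursively saturated — contradicting the choice of $M'$. Hence no such $\phi$ exists.

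I do not expect a genuine obstacle here: every ingredient is already in place in the excerpt, and the only thing to be careful about is the bookkeeping of which language the interpreting formula is allowed to use. The one point worth a sentence of justification is that a ``relative interpretation'' in the sense relevant to \cite{fujimoto} indeed does not disturb the arithmetical vocabulary — but since the corollary's own statement fixes $\phi(x) \in \mathcal{L}_{\PAT}$ and demands $(M,S)\models\UTB$ with the \emph{same} underlying $M$, this is built into what we are asked to refute, so the argument above is complete as stated. If one wanted the fully general notion of relative interpretability (allowing a definable reinterpretation of the domain and of $+,\cdot$, etc.), one would additionally note that $\TB$, being conservative over $\PA$ and indeed a definitional-type extension in the relevant sense, cannot via such an interpretation escape the class $\mathfrak{RS}$ on models whose arithmetic part is non-recursively-saturated; but for the stated form the short argument suffices.

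I would therefore write the proof as: \emph{Assume such a $\phi$ exists. By Theorem \ref{tb a rs} pick $(M',T)\models\TB$ with $M'$ not recursively saturated. Then $\phi$ defines $S\subseteq M'$ with $(M',S)\models\UTB$, so $M'\in\mathfrak{UTB}\subseteq\mathfrak{RS}$ by Fact \ref{UTB to RS}, contradiction.}
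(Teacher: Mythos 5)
Your proof is correct and is exactly the argument the paper intends: the corollary is stated without an explicit proof, as an immediate consequence of the ``moreover'' part of Theorem \ref{tb a rs} (a $\TB$-expandable model that is not recursively saturated) combined with Fact \ref{UTB to RS} ($\mathfrak{UTB}\subseteq\mathfrak{RS}$). Your remark that the stated form of relative interpretability fixes the arithmetical vocabulary is also the right reading of the statement.
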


When we proved Theorem \ref{tb a rs} we were not at all aware of the problem. The above corollary has been formulated by Carlo Nicolai, who has also brought Fujimoto's paper to our attention.

Let us now return to the inclusion $\mathfrak{RS}\supseteq \mathfrak{UTB}$. It is easy to show that for a counterexample to equality here we have to search among models with \textit{uncountable cofinality}. By a result of Smorynski--Stavi (see \cite{Schmerl}) if $T$ is an extension of $\PA$ in a language $\mathcal{L}\supseteq\mathcal{L}_{\PA}$ such that $T$ contains induction axioms for $\Form_{\mathcal{L}}$, then $T$ is preserved in cofinal extensions, i.e.
$$M \models T\wedge M \prec_{cf} N \Rightarrow N \models T.$$
Putting it together with the fact that countable and recursively saturated models of $\PA$ are resplendent and that $\UTB$ is a conservative extension of $\PA$ we see that every recursively saturated model of $\PA$ with \textit{countable cofinality} can be expanded to a model of $\UTB$. In order to prove the existence of recursively saturated models which do not expand to a model of $\UTB$ we will profit from the fact that the interpretation of $\UTB$-truth predicate, if exists, is always a proper class.
\begin{obs}
If $T\subseteq M$ is such that $(M, T)\in \mathfrak{UTB}$ then $T$ is a proper class on $M$. Indeed, $T$ is a class, because $\UTB$ contains induction axioms for all formulae of enriched language and $T$ is obviously undefinable by Tarski's theorem.
\end{obs}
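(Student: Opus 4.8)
The plan is to verify directly the two clauses in the definition of a proper class for $T$: that every bounded piece $\{x \in T \mid x < a\}$ is definable in $M$, and that $T$ itself is not definable in $M$. Throughout I use that $\UTB$ contains $\PA \cup Ind(T)$ together with the uniform Tarski biconditional scheme, so that in particular $(M,T)$ satisfies $T\phi \equiv \phi$ for every \emph{standard} sentence $\phi$.

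For the bounded pieces I would use $Ind(T)$. The $\mathcal{L}_{\PAT}$-formula $\beta(y) := \exists c\, \forall x < y\,(x \in c \equiv T(x))$ satisfies $\beta(0)$ (take an empty code) and $\beta(y) \rightarrow \beta(y+1)$ (adjoin $y$ to the code exactly when $T(y)$), so $(M,T) \models \forall y\, \beta(y)$ by induction on $y$, which is licensed since $\beta \in \mathcal{L}_{\PAT}$. Hence for each $a \in M$ there is $c \in M$ with $(M,T) \models \forall x < a\,(x \in c \equiv T(x))$, and then $\{x \in T \mid x < a\}$ is defined in $M$ by $x < a \wedge x \in c$ with parameters $a, c$.

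For undefinability I would argue by contradiction using a parametrized, model-theoretic form of Tarski's theorem. Suppose $T$ is defined in $M$ by an $\mathcal{L}_{\PA}$-formula $\tau(x, \bar w)$ with parameters $\bar a \in M$. Applying the $\PA$-provable diagonal construction inside $M$ to the standard formula $\phi(v, \bar w) := \neg\tau(\mathrm{diag}(v), \bar w)$ (with $\mathrm{diag}$ the diagonalizing function and $\bar w$ kept as extra substitutable arguments), one obtains $g \in M$, namely $g := \ulcorner\phi(v,\num{\bar a})\urcorner$, such that $\delta := \mathrm{diag}(g)$ satisfies $\delta = \ulcorner \phi(\num g, \num{\bar a})\urcorner$ in the sense of $M$. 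The instance of the uniform Tarski biconditional for $\phi$, evaluated at the closed terms $\num g$ and $\num{\bar a}$, yields $(M,T) \models T(\delta) \equiv \neg\tau(\delta, \bar a)$, while the assumed definability of $T$ gives $(M,T) \models T(\delta) \equiv \tau(\delta, \bar a)$ — a contradiction.

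The only delicate point — and the reason it is the uniform rather than the disquotational biconditional that is invoked — is that $\bar a$ may be nonstandard, so that $\delta$ is in general a nonstandard element of $\Sent(M)$, to which the plain biconditionals $T\ulcorner\sigma\urcorner \equiv \sigma$ (quantifying only over genuine standard sentences) do not apply; the uniform biconditionals, being closed under substitution of arbitrary closed terms, apply to $\delta$ directly, and that is exactly what makes the Liar-style argument go through. Everything else is routine.
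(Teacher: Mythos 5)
Your proof is correct and follows the same route the paper intends: induction for $\mathcal{L}_{\PAT}$-formulae to code the bounded pieces of $T$, and Tarski's undefinability theorem for the properness. Your elaboration of the second half is a worthwhile precision of the paper's ``obviously undefinable'': since definability in $M$ is definability \emph{with parameters}, the diagonal sentence may be nonstandard, and you correctly identify that it is the uniform biconditionals (applied at arbitrary, possibly nonstandard, closed terms) rather than the plain disquotational ones that make the Liar argument go through.
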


Recall that model $M$ is \textit{rather classless} if it contains no proper class. Interestingly, the following theorem holds:

\begin{thm}\label{Schmerlthm}
There exists a recursively saturated and rather classless model of $\PA$.
\end{thm}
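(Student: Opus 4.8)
The plan is to build a recursively saturated, rather classless model of $\PA$ of cardinality $\aleph_1$ by a union-of-chain construction, where at each countable stage we secure recursive saturation by realizing a prescribed type, and across the whole chain we kill off every potential proper class by a diagonalization argument. Concretely, I would start with a countable recursively saturated $M_0 \models \PA$ and construct an elementary chain $\langle M_\alpha : \alpha < \omega_1 \rangle$ of countable recursively saturated models, with $M_\alpha \prec_{cons} M_{\alpha+1}$ (a conservative elementary end extension, available by MacDowell--Specker, Fact \ref{mds}, applied inside the language $\mathcal{L}_{\PA}$ so that the chain stays recursively saturated at successor stages -- using that conservative extensions of recursively saturated countable models can be taken recursively saturated), and taking unions at limits. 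The final model is $M = \bigcup_{\alpha < \omega_1} M_\alpha$; it has cofinality $\omega_1$, and one checks it remains recursively saturated because any recursive type with finitely many parameters already lives in some countable $M_\alpha$ and is realized in $M_{\alpha+1} \prec M$.

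The heart of the argument is arranging that $M$ is rather classless, i.e.\ that no $X \subseteq M$ is a proper class. Here I would use a bookkeeping device: since $|M| = \aleph_1$ and each $M_\alpha$ is countable, enumerate in advance all ``codes'' for potential classes -- more precisely, for each pair consisting of a countable ordinal $\alpha$ and a subset $Y \subseteq M_\alpha$ that is coded in $M_\alpha$ (or rather, that looks like the trace of a class up to level $\alpha$), we want to ensure $Y$ does not extend to a proper class of $M$. The key structural fact to exploit is that in a conservative end extension $M_\alpha \prec_{cons} M_{\alpha+1}$, any would-be proper class $X$ of $M$ has the property that $X \cap M_\alpha$, being bounded below every element of $M_{\alpha+1} \setminus M_\alpha$ and piecewise coded, is actually coded by a single element of $M_{\alpha+1}$; running this through the whole tower, a proper class $X$ of $M$ would be ``approximated'' at each stage by an element of the next model. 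The rather-classless conclusion then follows by a diagonal argument: at stage $\alpha$ we use the freedom in choosing $M_{\alpha+1}$ (the conservative end extension is not unique) to diagonalize against the $\alpha$-th candidate, ensuring its approximations cannot cohere into a genuine class in the limit. This is exactly the Schmerl-style construction, and since the excerpt promises Schmerl's construction in the appendix, I would simply sketch this and refer forward.

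The main obstacle -- and the technically delicate point -- is the coordination between the two requirements. Recursive saturation is a ``positive'' requirement forcing us to grow the model (realize types), while rather-classlessness is a ``negative'' requirement forcing us to control what the union looks like from outside; these pull in opposite directions, and the subtlety is that conservative end extensions are precisely what lets us grow while keeping tight control over bounded subsets. One must verify that MacDowell--Specker can be applied so as to simultaneously (i) preserve recursive saturation, (ii) keep the extension conservative and end, and (iii) leave enough room to diagonalize at stage $\alpha$ against candidate class number $\alpha$. The cleanest route is to observe that for countable recursively saturated $M_\alpha$ there are, by resplendence, many non-isomorphic conservative elementary end extensions, and a counting argument shows one of them avoids coding any given ``bad'' subset; pushing this through the limit stages (where one must check that a union of conservative extensions along an $\omega_1$-chain still cannot acquire a new proper class) is where care is needed. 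Since the statement is flagged as classical and its proof is deferred to the appendix, I would present this as a plan and cite \cite{Schmerl} (Kaufmann's theorem) together with the appendix for the full details.
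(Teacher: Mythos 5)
Your skeleton---an elementary chain $\langle M_\alpha : \alpha<\omega_1\rangle$ of countable recursively saturated models with $M_\alpha\prec_{cons}M_{\alpha+1}$, unions at limits---is exactly the right one, and your observation that recursive saturation of the union reduces to recursive saturation at successor stages is fine (though you only assert, rather than prove, that a conservative elementary end extension of a countable recursively saturated model can itself be taken recursively saturated; the standard way to get this is to expand $M_\alpha$ to a model of an inductive (partial) satisfaction theory by resplendence and apply MacDowell--Specker in the \emph{expanded} language, so that the expansion of $M_{\alpha+1}$ witnesses its recursive saturation by overspill---this is precisely the ``truth-theoretic'' route the introduction alludes to).

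The genuine gap is in your mechanism for rather-classlessness. The proposed bookkeeping---``enumerate in advance all codes for potential classes'' and diagonalize against the $\alpha$-th candidate at stage $\alpha$---cannot be carried out as described: the candidates are subsets of a model that does not yet exist, there are $2^{\aleph_0}$ candidate traces on each $M_\alpha$ (so the list need not have length $\omega_1$ without CH), and, worse, to kill a class $X$ at stage $\alpha$ you must act on $X\cap M_\alpha$ \emph{before} it gets coded higher up, i.e.\ you must correctly guess $X\cap M_\alpha$ at stage $\alpha$. That guessing problem is exactly why Kaufmann's original proof needed $\diamondsuit$; a naive enumeration does not solve it. The point of Schmerl's ZFC argument---and what makes your insistence on conservative links pay off---is that \emph{no diagonalization is needed at all}. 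If $X$ is a class of $M=\bigcup M_\alpha$, then $X\cap M_\alpha$ is a bounded piece of $X$ (bounded by any $a\in M_{\alpha+1}\setminus M_\alpha$), hence definable in $M$; since conservativity is transitive and passes through limits of the chain, $M_\alpha\prec_{cons}M$, so $X\cap M_\alpha$ is definable in $M_\alpha$ by some $\phi_\alpha(x,\bar p_\alpha)$ with $\bar p_\alpha\in M_\alpha$. For limit $\alpha$ the parameters lie in some $M_{h(\alpha)}$ with $h(\alpha)<\alpha$, so by Fodor plus a counting argument a single formula and parameter tuple work on a stationary (hence cofinal) set of $\alpha$, whence $X=\phi(M,\bar p)$ is definable and not a proper class. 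Replace your diagonalization with this pressing-down argument (and supply the preservation of conservativity along the chain, which you use implicitly), and the proof is complete.
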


The existence of recursively saturated rather classless models of $\PA$ was first demonstrated by Matt Kauffman in $\ZFC + \diamond$ (\cite{Kauf}). The assumption about existence of $\diamond$-sequence was later eliminated by Shelah (in \cite{She}). In the appendix we will present another argument for the existence of recursively saturated rather classless models which was given in \cite{Schmerl}.

\section{The Main Result}\label{sect: main}	
		
		In the following part we will present the most technically involved part of our result i.e.
		
	\begin{thm}
	$\mathfrak{CT}^- \subseteq \mathfrak{UTB},$ i.e. for any $(M,T) \models \CT^-,$ there exists $T'$ such that $(M,T') \models \UTB.$	
	\end{thm}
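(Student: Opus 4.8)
The plan is to start from an arbitrary model $(M,T)\models\CT^-$ and manufacture, inside $M$, a uniform truth predicate $T'$ satisfying the full $\UTB$ axioms, including induction for the enriched language. The naive attempt --- define $T'(\phi(\bar a))$ to hold iff the sentence obtained by substituting numerals for $\bar a$ is in $T$ --- fails on two counts: first, $T$ only gives us compositional clauses, not disquotation, so we need to verify the biconditional $T'\phi(\bar t)\equiv\phi(\bar t^\circ)$ actually holds for all arithmetical $\phi$, which by a Tarski-style hierarchy argument requires something like $\Sigma_n$-correctness of $T$ for each standard $n$; second, and more seriously, the resulting $T'$ will satisfy the uniform Tarski biconditionals but there is no reason it satisfies $Ind(T')$, since $T$ itself need not be inductive (we only assumed $\CT^-$, not $\CT$). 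So the real content is to produce an inductive uniform truth class.

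The first step I would carry out is to reduce to the case where $M$ is a model whose $\CT^-$-truth class has good reflection properties. Using the compositional axioms of $\CT^-$, one proves by (external) induction on $n$ that $T$ correctly decides all standard-$\Sigma_n$ sentences with parameters, so the "disquotational core" $T'_0(\phi(\bar a)):\Leftrightarrow \phi(\num{\bar a})\in T$ does satisfy every standard instance of the uniform Tarski biconditional. The remaining and genuinely hard step is to upgrade $T'_0$ (or a cleverly chosen variant) to a predicate that is a full class, i.e. such that the model with the new predicate satisfies induction for all $\mathcal{L}_{\PAT}$-formulas. Here the key idea should be to work with \emph{arithmetical} formulas only --- $\UTB$ restricts the disquotation scheme to $\mathcal{L}_{\PA}$-formulas, so for $Ind(T')$ we only need induction for formulas mentioning $T'$, and since $T'$ restricted to any standard complexity level $\Sigma_n$ is definable in $M$ (because $T$ decides $\Sigma_n$-truth, which is $M$-definable by the arithmetized Tarski hierarchy), any formula mentioning $T'$ with a fixed bound on the complexity of the sentences $T'$ is applied to is equivalent to an arithmetical formula. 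The induction instances that are \emph{not} reducible this way are those where the formula quantifies over sentences of nonstandard complexity.

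To handle those, I expect the argument to go through an overspill/coding move: one shows that the partial satisfaction classes $\mathrm{Sat}_n$ (the restriction of $T$ to $\Sigma_n$-sentences) form, in $M$, a sequence that can be extended past $\omega$ --- i.e. there is a nonstandard $c$ and an $M$-coded object that behaves like a partial satisfaction class for formulas of complexity $<c$ and agrees with $T$ on standard complexities. This is where $\CT^-$ (rather than mere $\PA$) is essential: the full compositional class $T$ lets one run overspill on the statement "there is a coded partial satisfaction class up to complexity $k$ that agrees with all the compositional clauses and with $T$ below $k$". From such a nonstandard-length coded object one can \emph{define} a $T'$ that (a) still satisfies all standard uniform disquotations, because it agrees with $T$ on standard sentences, and (b) \emph{is} $M$-definable, hence trivially a class, hence $Ind(T')$ holds for free. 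The main obstacle --- and the place I'd expect the bulk of Section~\ref{sect: main}'s technical work to live --- is exactly this overspill step: ensuring the coded partial classes cohere into something of nonstandard length while still being correct on the standard part, since $T$ being non-inductive means one cannot simply prove "for all $k$, a coded partial class up to $k$ exists" inside $(M,T)$; one likely needs a more delicate argument, perhaps building the approximations so that each is canonically determined by $T$ on a definable initial segment, and then applying overspill to a $\Pi$-statement expressing coherence rather than mere existence. Getting the quantifier complexity of that statement low enough to overspill against the induction available --- full induction for $\mathcal{L}_{\PAT}$ is \emph{not} available, only $\PA$-induction plus the $\CT^-$ axioms --- is the crux.
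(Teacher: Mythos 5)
Your diagnosis of the problem is accurate, but the mechanism you propose at the decisive step cannot work. You want to produce an $M$-coded (hence arithmetically definable with a parameter) predicate $T'$ that simultaneously (a) satisfies every standard instance of the \emph{uniform} disquotation scheme and (b) is $M$-definable, so that $Ind(T')$ comes for free. These two demands are jointly inconsistent: if $T'(x)$ were $\theta(x,c)$ for an arithmetical $\theta$ and a parameter $c$, then applying the uniform biconditional to the \emph{standard} two-variable formula $\phi(x,y) := \neg\theta(\mathrm{Subst}(x,\num{x},\num{y}),y)$ and instantiating the free variables at $\ulcorner\phi\urcorner$ and $c$ yields a liar-style contradiction. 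This is exactly why the paper's Observation on $\UTB$ states that any $\UTB$-truth predicate is a \emph{proper} class: the uniform scheme, unlike the plain $\TB$ scheme, lets you diagonalize through parameters. (A coded object can realize $\TB$ --- that is the Engstr\"om--Cie\'sli\'nski characterization --- but never $\UTB^-$.) Relatedly, the overspill you hope to run is on a statement that must mention $T$ (to guarantee agreement with $T$ on standard complexities), and $\CT^-$ supplies \emph{no} induction for such formulas, not even $\Delta_0$ induction; lowering the quantifier complexity of the coherence statement does not help, because the obstruction is the occurrence of $T$, not the quantifier prefix.

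The paper's route is structurally different at precisely this point. Instead of a coded set, it builds a primitive recursive family of (possibly nonstandard) \emph{arithmetical formulae} $\rho_a$, shaped so that for every nonstandard $a$ the $\mathcal{L}_{\PAT}$-formula $T\rho_a(\num{x})$ satisfies uniform disquotation for all standard formulae; the eventual predicate is of the form $T\gamma_a(\num{\rho_b(\num{x})})$ --- definable in $(M,T)$ with parameters, but not in $M$, as it must be. The substitute for the unavailable overspill is the Rank Lemma (a Lachlan/Smith-style argument): one defines a rank measuring how close $\gamma_a$ comes to satisfying disquotation together with a pre-chosen definable property $\delta$, shows the rank strictly increases at each successor unless it is already $+\infty$, and concludes from the external well-foundedness of $\omega$ that some $\gamma_a$ attains rank $+\infty$. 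Induction for the new predicate is then not free but engineered: $\delta$ is chosen to say that certain statements (compositionality up to $y$, and truth of the first $y$ induction instances for $\rho_y$) are \emph{inductive}, so that once $\delta[T']$ is secured, genuine overspill becomes legitimate for $T'$ and, via a generalized commutativity lemma, delivers full $\mathcal{L}_{\PAT}$-induction for $T''(x):=T'(\rho_e(\num{x}))$. Without something playing the role of the rank argument, your proposal does not close.
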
				
	
		We will precede the proof of the theorem with two lemmata. The first  of them states that in any nonstandard model $(M,T)$ of $\CT^-$ we can find nonstandard \emph{arithmetical} truth predicates which, in a sense, satisfy $\UTB^-$, i.e. there exists a formula
		\begin{displaymath}
		T'(x) := T \rho(\num{x}),
		\end{displaymath}
		 where $\rho(x) \in \Form(M)$ such that for an arbitrary standard arithmetical formula $\psi$ and arbitrary terms $t_1, \ldots, t_n \in \Term(M)$ we have
		\begin{displaymath}
		(M,T) \models T' \psi(t_1, \ldots, t_n) \equiv \psi(t_1^{\circ}, \ldots, t_n^{\circ}).
		\end{displaymath}
		Let us say that in such a case the formula $T'$ satisfies the \df{Uniform Disquotation Scheme}. This lemma was actually already present in the paper \cite{Smith}. 	Let $(\chi_i)$ be an arbitrary primitive recursive enumeration of arithmetical formulae. We will repeatedly refer to this enumeration.
		
			\begin{defin}
				By $\phi[ \xi \mapsto \delta]$ we mean the result of formally substituting a formula $\delta$ for every occurrence of the boolean subformula $\xi$ in a formula $\phi.$ 	
			\end{defin}

		\begin{lem} \label{lem_UTB_family}
			There exists a primitive recursive family of formulae $(\rho_n)$ such that any for any $n \in \omega$ and $i\leq n$ provably in $\CT^-$ we have
			\begin{displaymath}
			\forall \bar{t} \ \Bigl( T \rho_n(\num{\chi_i(\bar{t})}) \equiv \chi_i(\bar{t^{\circ}}) \Bigr).
			\end{displaymath}
			Moreover, for an arbitrary model $(M,T) \models \CT^-$, for an arbitrary nonstandard $a$ and for an arbitrary $i < \omega$ (i.e. for an arbitrary standard $i$) we have
			\begin{displaymath}
			\forall \bar{t} \ \Bigl( T \rho_a(\num{\chi_i(\bar{t})}) \equiv \chi_i(\bar{t^{\circ}}) \Bigr).
			\end{displaymath}
		\end{lem}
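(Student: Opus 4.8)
The plan is to build the family $(\rho_n)$ by a primitive recursive ``patching'' construction along the enumeration $(\chi_i)$: start from a trivially correct disquotation formula and, at stage $i$, surgically repair the behaviour of the current formula on the $i$-th arithmetical formula $\chi_i$, using the defined substitution operation $\phi[\xi \mapsto \delta]$ introduced just above. Concretely, one wants $\rho_n(v)$ to be a formula with a single free variable $v$ (ranging over codes of formulae-with-substituted-numerals) such that, for each $i \le n$, plugging in the code of $\chi_i(\bar t)$ makes $T\rho_n$ agree with $\chi_i(\bar t^{\circ})$. The natural candidate is to let $\rho_n$ be a big disjunction (or nested case distinction) over $i \le n$: informally, ``$v$ is (the code of) some $\chi_i(\bar t)$ with $i \le n$, and $\chi_i(\bar t^{\circ})$ holds''. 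The point of phrasing it via the substitution operator $[\xi \mapsto \delta]$ is that this ``decoding'' of $v$ back into a genuine arithmetical statement is itself arithmetically definable, uniformly in the (possibly nonstandard) parameter $n$, so that $(\rho_n)$ is genuinely primitive recursive in $n$.

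The first half of the lemma --- the schematic claim, provable in $\CT^-$ for each standard $n$ and each $i \le n$ --- is then proved by an external induction on $n$ (equivalently, by unwinding the finitely many cases), invoking the compositional axioms of $\CT^-$: $T$ commutes with $\neg$, $\wedge$, $\vee$, the quantifiers, and correctly handles atomic $=$ and $\le$ sentences evaluated at closed terms. Since $\rho_n(\num{\chi_i(\bar t)})$ is, provably in $\PA$, a Boolean combination built from atomic pieces of the shape $\chi_i(\bar t^{\circ})$, repeated application of the $\CT^-$ axioms pushes $T$ inward until one lands on exactly the arithmetical sentence $\chi_i(\bar t^{\circ})$; here one uses that for \emph{standard} $i$ the formula $\chi_i$ has standard (metatheoretic) complexity, so only finitely many axiom applications are needed and the argument goes through in $\CT^-$ without any appeal to induction for $T$. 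The ``moreover'' clause, for an arbitrary model $(M,T) \models \CT^-$, a \emph{nonstandard} $a$, and a \emph{standard} $i$, follows immediately: since $i < \omega \le a$ we have $i \le a$ in $M$, so the sentence $\forall \bar t\,(T\rho_a(\num{\chi_i(\bar t)}) \equiv \chi_i(\bar t^{\circ}))$ is an instance --- at the (nonstandard) value $a$ --- of the formula which $\PA \subseteq \CT^-$ proves holds for all $n \ge i$; hence it holds in $M$. No overspill is even required, only the fact that the relativized statement ``$\forall n \ge i\ \forall \bar t\,(\ldots)$'' is a single first-order consequence of $\CT^-$ once $i$ is fixed as a standard numeral.

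The main obstacle --- and the only place where real care is needed --- is making the ``decoding of $v$'' uniformly arithmetical \emph{and} making the $\CT^-$-verification go through for the \emph{nonstandard} index $a$. One must be careful that, in $\rho_a(\num{\chi_i(\bar t)})$ with $a$ nonstandard, the disjunct corresponding to the standard index $i$ is genuinely ``activated'': this requires that the membership test $i \le a$ and the retrieval of $\chi_i$ from the enumeration are handled by a $\Sigma_1$ (or $\Delta_0$) formula that $\PA$ proves correct, so that its behaviour on the standard input $\chi_i(\bar t)$ is pinned down. A cleaner route, and the one I would actually follow, is to avoid one giant disjunction and instead define $\rho_n$ recursively so that $\rho_{n+1} = \rho_n[\,\xi_{n+1} \mapsto \chi_{n+1}(\cdots)\,]$, where $\xi_{n+1}$ is a reserved ``placeholder'' Boolean subformula; then the correctness on $\chi_i$ for $i \le n$ is preserved by the substitution lemma for $[\xi \mapsto \delta]$, and the whole construction is transparently primitive recursive. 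Either way, once the combinatorics of the definition are fixed, both conclusions are routine applications of the compositional axioms; I expect essentially no difficulty beyond bookkeeping, which is consistent with the authors' remark that this lemma already appears in \cite{Smith}.
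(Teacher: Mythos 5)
Your treatment of the schematic half (standard $n$, external unwinding of a standard boolean combination via the compositional axioms) is fine and agrees with the paper. The gap is in the ``moreover'' clause. You claim that, once $i$ is fixed as a standard numeral, the statement $\forall n\geq i\ \forall \bar t\ \bigl(T\rho_n(\num{\chi_i(\bar t)})\equiv\chi_i(\bar t^{\circ})\bigr)$ is a single first-order consequence of $\CT^-$, so that instantiating $n$ at a nonstandard $a$ settles the matter with ``no overspill even required''. But the first half of the lemma only gives one theorem of $\CT^-$ for each \emph{standard} $n$; promoting that schema to the universally quantified (in $n$) sentence would require induction on $n$ for a formula containing $T$, which is precisely what $\CT^-$ lacks (and overspill is unavailable for the same reason). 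This is the entire difficulty of the lemma. It also defeats your first candidate construction: if $\rho_a$ is a ``big disjunction over $i\leq a$'' with $a$ nonstandard, then evaluating $T\rho_a(\num{\chi_i(\bar t)})$ means pushing $T$ through a boolean combination of nonstandard depth, i.e.\ nonstandardly many applications of the compositional axioms --- again induction in the extended language. The worry you do raise (whether the membership test $i\leq a$ and the retrieval of $\chi_i$ are $\PA$-provably correct) is not where the problem lies; the problem is unwinding $T$ on a nonstandard formula at all.

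The paper's construction is engineered exactly to circumvent this. The $\rho_n$ are built so that for every standard $i$ and every nonstandard $a$, the formula $\rho_a$ is \emph{literally} $\rho_{2i}$ with a single boolean subformula occurrence of $\xi_{2i}$ replaced by $\xi_{2i}\wedge\gamma$ for some nonstandard $\gamma$ (and, dually, is obtained from $\rho_{2i+1}$ by weakening $\xi_{2i+1}$ to $\xi_{2i+1}\vee\gamma$), where $\rho_{2i}$ and $\rho_{2i+1}$ are \emph{standard, positive} boolean combinations of the $\xi_j$, which themselves start with quantifier blocks and hence are the minimal boolean subformulae. Because the skeleton above the substitution point is standard and positive, finitely many applications of the compositional axioms give the sandwich $T\rho_{2i+1}(\num{x})\rightarrow T\rho_a(\num{x})\rightarrow T\rho_{2i}(\num{x})$, reducing the nonstandard case to two standard ones, which are then handled as in the first half. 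Your ``cleaner route'' with a reserved placeholder and the substitution operator $[\xi\mapsto\delta]$ points in this direction, but without the positivity/monotonicity argument the transfer of correctness from standard to nonstandard indices is exactly the step you have not supplied.
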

		
		In other words, the above lemma states that there exists a primitive recursive family of arithmetical formulae which behave like partial truth predicates for indefinitely growing finite sets of standard formulae ($n$-th truth predicate works fine for first $n$ formulae in our enumeration) such that nonstandard truth predicates from this family behave well for all standard formulae.
		
		\begin{proof}
			We will try to construct an analogue of simple arithmetical partial truth predicates 
			$$ \tau(x) = \bigvee_{i=1}^n ( x=\phi_i ) \wedge \phi_i$$
			but in such a way that they behave well in all models of $\CT^-$ also for nonstandard $n$. Let $(\chi_i)_{i< \omega}$ be an our fixed primitive recursive enumeration of arithmetical formulae. Let
			\begin{eqnarray*}
				\xi_{2i}(x) & = & \forall \bar{t}  \bigwedge_{ j \leq i}     \ \Bigl(  x= \chi_j(\bar{t})  \longrightarrow \chi_j(\bar{t}^{\circ}) \Bigr) \cr
				\xi_{2i+1}(x) & = &  \exists \bar{t} \bigvee_{j \leq i}  \ \Bigl( x= \chi_j(\bar{t}) \Bigr)
				. \cr 
			\end{eqnarray*}
			
			Finally, let us define formulae which will play the role of the partial truth predicate $\tau$ above.
			
			\begin{eqnarray*}
				\rho_0 & = & \xi_0 \cr
				\rho_{2i+1} & = & \rho_{2i} [\xi_{2i} \mapsto \xi_{2i} \wedge \xi_{2i+1} ] \cr
				\rho_{2i+2} & = & \rho_{2i+1} [\xi_{2i+1} \mapsto \xi_{2i+1} \vee \xi_{2i+2} ]. \cr
			\end{eqnarray*}
			Obviously these definitions may be formalised in $\PA$.
			Note that a formula $\xi_i$ begins with a block of quantifiers. Thus although in principle a formula $\rho_j$ may have many subformulae of the form $\xi_j$, there is exactly one which is its \emph{boolean} subformula, namely the rightmost one. So, as $i$ gets bigger, the formulae $\rho_i$ keep growing only to the right.  Here are two simple properties of so defined $\rho_i$'s for arbitrary $i \in \omega$ and any nonstandard $a \in M$:

			\begin{enumerate}
				\item $T\rho_a(\num{x}) \rightarrow T\rho_{2i}(\num{x}).$
				\item $T\rho_{2i+1}(\num{x}) \rightarrow T \rho_a(\num{x}).$				
			\end{enumerate}
			
			To prove the first one assume that $T \rho_a(\num{x})$ holds. Note that 
			$$ \rho_a(\num{x}) = \rho_{2i}[ \xi_{2i} \mapsto \xi_{2i} \wedge \gamma ](\num{x})$$
			for some nonstandard formula $\gamma.$ Since $\rho_{2i}$ is a positive combination of the formulae $\xi_j$ (i.e. no negation symbol occurs in it as a boolean formula with $\xi_j$'s treated as propositional variables) a substitution of $\xi_{2i} \wedge \gamma$ for $\xi_{2i}$ yields a formula stronger (no weaker) than $\rho_{2i}.$ As it is purely a matter of finite boolean calculus, this can be proved in $\CT^-.$ Proof of the second implication is analogous.
			
			We are now in a position to show that for an arbitrary nonstandard $a$ the formula $ T \rho_a(x)$ satisfies the uniform disquotation scheme i.e for an arbitrary standard arithmetical $\phi$ and $t_1, \ldots, t_n \in \Term(M)$ the following equivalence holds: 
			$$ T \rho_a \num{\phi(\bar{t})} \equiv \phi( \bar{t} ^{\circ}).$$ 
			
			Let us take any $\phi(\bar{t}).$ We know that $\phi(\bar{t}) = \chi_i(\bar{t})$ for some $i \in \omega.$ Suppose 
			$$T \chi_i (\bar{t}).$$ 
			Then it is easy to see  that 
			\begin{enumerate}
				\item $T \xi_{2i+1}(\num{\chi_i(\bar{t})})$  
				\item $\neg T \xi_{2j+1}(\num{\chi_i(\bar{t})}),$ for $j<i$
				\item $T \xi_{2j}(\num{\chi_i(\bar{t})})$ for arbitrary $j.$ 
			\end{enumerate}		
			
			It is enough to show that $T \rho_{2i+1} (\num{\chi_i(\bar{t})}).$ To this end we will prove by external backwards induction that for any boolean subformula $\psi$ of $\rho_{2i+1}$ different from formulae of the form $\xi_l$ the righthandside part of $\psi$ is true (note that since $\xi_l$'s begin with a block of quantifiers, they are the minimal boolean subformulae of $\rho_{2i+1}$). By assumption and points 1-3. above the claim holds for the minimal  subformula of $\rho_{2i+1}$ different from $\xi_l$'s, i.e. for
			
			$$ \psi = \xi_{2i}(\num{\chi_i(\bar{t})}) \wedge \xi_{2i+1}(\num{\chi_i(\bar{t})}).$$
			
			Now any righthandside of any subformula of $\rho_{2i+1}$ is exactly of one of the two following forms:
			\begin{enumerate}
				\item $\xi_{2j} \wedge \gamma$
				\item $\xi_{2j+1} \vee \gamma.$
			\end{enumerate}
			But by induction hypothesis we can assume that $\gamma$ is true. Now, since the main connective in the formula $\rho_{2i+1}$ is a conjunction of the form $\xi_0 \wedge \gamma,$ we are able to prove in $\CT^-$ that 
			$$ T \rho_{2i+1}(\num{\chi_i(\bar{t})}).$$ By previous observation it follows that $$T \rho_a (\num{\chi_i(\bar{t})}).$$ The converse implication is handled in a similar fashion (but now using $T \rho_a(\num{x}) \rightarrow T \rho_{2i}(\num{x})$). Using similar arguments one can also show, that $\rho_{2n}$ satisfies uniform disquotation scheme for formulae $\chi_0,\ldots,\chi_n$ for arbitrary $n \in \omega.$
		\end{proof}

		Before we state the next lemma, let us introduce some notation. 
		
	\begin{defin}
		Let $\delta(\bar{x})$ be an arbitrary formula in the language $\mathcal{L}_{\PAP}$ i.e. the language of arithmetic with a unary predicate $P(x)$ added. Then for an arbitrary formula $\phi$ with one free variable by $\delta[\phi]$ we mean the result of formally substituting the formula $\phi(x_i)$ for any occurrence of $P(x_i)$ in the formula $\delta$ (possibly preceded by some fixed renaming of bounded variables in $\delta,$ so as to avoid clashes). If $M \models \delta[\phi]$ we will say that $\phi$ satisfies a property $\delta$ in the model $M$.
	\end{defin}

	Let us quickly give an example of the above notions, which probably could be more illuminating than a definition. 
	
	\begin{exa}
		Let $\delta(x,y) = \Bigl( P(x) \equiv P(y) \Bigr).$ Then 
		$$\delta[z=z] = \Bigl( (x=x) \equiv (y=y) \Bigr).$$
	\end{exa}
	
	Obviously both notions may be formalized in $\PA.$ We are now ready to state the main lemma. This is the combinatorial core of our theorem. Essentially it has been proved in \cite{Smith}, although for a special case. We reprove it for the convenience of the Reader. Basically, the lemma states that the existence of a truth predicate satisfying $\CT^-$ allows us to define a predicate satisfying $\UTB^-$ and some additional definable properties shared by arithmetical formulae. 
	
	\begin{lem}[Main Lemma] \label{lem_main}
	Let $\delta$ be an arbitrary formula in $\mathcal{L}_{\PAP}.$ Let $(M,T) \models \CT^-.$ Suppose that for an arbitrary standard arithmetical formula $\phi$ we have $$(M,T) \models \delta[\phi].$$ Then there exists a formula $T'(x)$ in $\mathcal{L}_{\PAT}$ with parameters from $M$ such that
$$(M,T) \models T' \psi(\bar{t}) \equiv \psi(\bar{t}^{\circ})$$ 	
	 for an arbitrary standard arithmetical formula $\psi$ and arbitrary terms $t_1, \ldots, t_n \in \Term(M)$ and moreover
$$(M,T) \models \delta[T'].$$
	\end{lem}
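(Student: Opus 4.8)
The plan is to produce $T'$ in the shape $T'(x):=T\rho(\num{x})$ for a suitably chosen arithmetical $\rho\in\Form(M)$ --- and (assuming, as we may, that $M$ is nonstandard) $\rho$ will be a nonstandard member $\rho_a$ of a primitive recursive family built essentially like the family $(\rho_n)$ of Lemma~\ref{lem_UTB_family}. By that lemma, every nonstandard $T\rho_a(\num{x})$ already satisfies the Uniform Disquotation Scheme for all standard arithmetical formulae, so the whole problem reduces to arranging, for one nonstandard index $a$, that in addition $(M,T)\models\delta[T\rho_a(\num{\cdot})]$.

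I would first record that every \emph{standard} finite approximant is harmless. For standard $n$ the formula $\rho_{2n}$ has standard syntactic complexity, so finitely many applications of the compositional axioms of $\CT^-$ show that, provably in $\CT^-$ and hence in $(M,T)$ for \emph{all} $x$, the predicate $T\rho_{2n}(\num{x})$ is equivalent to an arithmetical formula $\alpha_n(x)$, where $\alpha_n$ depends primitively recursively on $n$. As $\alpha_n$ is then a standard arithmetical formula, the hypothesis yields $(M,T)\models\delta[\alpha_n]$, hence $(M,T)\models\delta[T\rho_{2n}(\num{\cdot})]$ for every standard $n$; likewise for the lower approximants $T\rho_{2n+1}(\num{\cdot})$. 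Moreover, by the monotonicity facts~(1)--(2) from the proof of Lemma~\ref{lem_UTB_family}, for any nonstandard $a$ and any standard $i$ the class $\{x:T\rho_a(\num{x})\}$ is squeezed between the two arithmetical classes $\{x:T\rho_{2i+1}(\num{x})\}$ and $\{x:T\rho_{2i}(\num{x})\}$, both satisfying $\delta$, and it agrees with genuine arithmetical truth on all standard-coded instances $\chi_j(\bar t)$, $j\le i$.

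The crux is to transfer the satisfaction of $\delta$ from all the standard approximants to the nonstandard predicate $T\rho_a(\num{\cdot})$. One cannot do this by a naive overspill on the formula ``$\delta[T\rho_n(\num{\cdot})]$'' in $n$, since this formula mentions $T$ and $\CT^-$ contains induction for arithmetical formulae only. Following~\cite{Smith}, my plan is to build the family so that the transfer is forced by the positive (negation-free in the $\xi_j$'s) structure of the $\rho_n$'s together with compositionality of $T$ and disquotation on standard formulae: at stage $n$ one adjoins further boolean clauses which, after $T$ is applied to $\rho_n(\num{x})$, encode precisely ``the $n$-truncated predicate satisfies $\delta$'', and one uses the hypothesis both to verify that these clauses are compatible with the disquotational ones and to see that, for a nonstandard index, peeling off the standardly many ``genuine-truth'' clauses leaves a residue that cannot spoil $\delta$. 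Equivalently, one may keep the original family and argue directly by an external analysis of $\rho_a(\num{x})$: which of its boolean subformulae are standardly many (hence controllable via the hypothesis) and which belong to the nonstandard tail (hence to be shown $\delta$-irrelevant) --- the same flavour of bookkeeping as in the proof of Lemma~\ref{lem_UTB_family}.

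I expect this last step to be the main obstacle. The property $\delta$ is arbitrary and in particular need not be monotone, so the mere squeezing of $T\rho_a(\num{\cdot})$ between arithmetical classes satisfying $\delta$ is not enough by itself; one must genuinely exploit the special internal structure of the formulae $\rho_n$ and the fact that $\delta$ was assumed to hold of \emph{every} standard arithmetical formula, with the only leverage being $\PA$ in $M$ and the compositional axioms of $\CT^-$ (there is no induction for $T$ to fall back on). Arranging this combinatorics correctly is exactly what makes the lemma the technical heart of the theorem.
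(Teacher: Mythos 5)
Your reduction is the right one (take $T'$ of the form $T\rho(\num{\cdot})$ for a nonstandard member of a primitive recursive family, use Lemma \ref{lem_UTB_family} to get disquotation for free, and concentrate on forcing $\delta$), and your diagnosis is accurate: overspill is unavailable because $\delta[T\rho_n(\num{\cdot})]$ mentions $T$, and squeezing between arithmetical classes satisfying $\delta$ proves nothing since $\delta$ is arbitrary. But the proposal stops exactly where the proof has to start. You gesture at ``adjoining boolean clauses encoding that the $n$-truncated predicate satisfies $\delta$'' and then concede that arranging this is the main obstacle; that obstacle is the entire content of the lemma, and no mechanism for overcoming it is given. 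So there is a genuine gap.

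The missing idea is an infinite-descent (Lachlan-style) argument organized around a \emph{rank function}. One defines a new primitive recursive family $(\gamma_i)$ --- not the $\rho_i$ themselves --- by $\gamma_0(x)=(x=x)$ and $\gamma_{i+1}(x)=\bigl(\delta[\gamma_i]\rightarrow\alpha_{i,i}(x)\bigr)$, where the auxiliary formulae $\alpha_{i,j}$ case-split, \emph{inside arithmetic}, on the first place where $\gamma_i$ violates uniform disquotation and fall back to the corresponding standard partial truth predicate $\rho_{2n}$ at that point. Assign to each $\gamma_a$ a rank in $\{-\infty\}\cup\omega\cup\{+\infty\}$: $-\infty$ if $T\delta[\gamma_a]$ fails, $+\infty$ if moreover disquotation holds for all standard $\chi_i$, and $n$ if disquotation first fails at $\chi_{n+1}$. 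The construction guarantees that whenever $r(\gamma_a)<+\infty$ then $r(\gamma_a)<r(\gamma_{a+1})$: if $r(\gamma_a)=-\infty$ then $\gamma_{a+1}$ is vacuously true everywhere, hence coextensive with $x=x$ and so satisfies $\delta$ by hypothesis; if $r(\gamma_a)=n$ then $T$ proves $\gamma_{a+1}$ coextensive with the standard formula $\rho_{2n}$, which satisfies $\delta$ by hypothesis and disquotes $\chi_0,\dots,\chi_n$, so its rank exceeds $n$. If no $\gamma_a$ had rank $+\infty$, then starting from a nonstandard $a$ the sequence $r(\gamma_a)>r(\gamma_{a-1})>\cdots$ would be an infinite descending chain in the well-order $\{-\infty\}\cup\omega$, a contradiction; hence some nonstandard $\gamma_a$ works and $T'(x):=T\gamma_a(\num{x})$ is as required. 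Without this rank-and-descent device (or an equivalent one), your outline does not yield $\delta[T']$, and note also that the witnessing predicate cannot in general be drawn from the unmodified family of Lemma \ref{lem_UTB_family}: the clause $\delta[\gamma_i]\rightarrow\cdots$ must be baked into the formulae themselves.
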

	
	Before we prove the lemma, we will state separately its technical core. To this end we need the following definition:
	
	\begin{defin}
		Let $(M,T) \models \CT^-$ be an arbitrary model. Recall that $\Form (M)$ denotes the set of arithmetical formulae with at most one free variable in the sense of the model $M.$ We define the \df{rank} function $r: \Form (M) \to \omega \cup \{- \infty, + \infty\} $ in the following way:
		\begin{displaymath}
		r(\gamma) = \left\{ \begin{array}{ll}
		- \infty, & \textnormal{if } (M,T) \models \neg T\delta[\gamma] \\
		+ \infty, & \textnormal{if } (M,T) \models T\delta[\gamma] \textnormal{ and for all $i \in \omega$ } \\
		& \phantom{if} (M,T) \models \forall \bar{t} \ \Bigl(T\gamma(\num{\chi_i(\bar{t})}) \equiv \chi_i(\bar{t}^{\circ}) \Bigr) \\
		n & \textnormal{if }  (M,T) \models T\delta[\gamma] \textnormal{ and $n+1$ is the least number } k \in \omega \\
		& \phantom{if} \textnormal{such that } (M,T) \models \neg \forall \bar{t} \ \Bigl(T\gamma(\num{\chi_k(\bar{t})}) \equiv \chi_k(\bar{t}^{\circ}) \Bigr).
		\end{array} \right.		
		\end{displaymath} 
	\end{defin}
	
	Although the above definition may seem overly technical, it is indeed very natural: the rank $r$ measures in somewhat na\"ive way, how close a given formula $\gamma$ come to satisfying Lemma \ref{lem_main}. The next technical result (which is essentially due to Smith, \cite{Smith}) states that there exists a family of formulae which behave extremely well with respect to the rank. Let $<$ denote the natural order on $\omega \cup \{ \pm \infty \} $ such that $- \infty < \omega < \infty$ and $<$ restricted to $\omega$ is the natural ordering. We will show that there exists a primitive recursive family of formulae $(\gamma_i)$ such that on this family the rank is locally monotone in $i$.
	
	\begin{lem}[Rank Lemma]
			Let $(M,T) \models \CT^-$ be an arbitrary model and let $r$ be the rank function on this model.
			There exits a primitive recursive family of formulae $(\gamma_i)$ such that for all $a \in M$ if $r(\gamma_a) < + \infty$, then 
			\begin{displaymath}
			r(\gamma_a) < r(\gamma_{a+1}).
			\end{displaymath}
	\end{lem}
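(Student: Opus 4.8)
The plan is to construct $(\gamma_i)_{i\in\omega}$ by a primitive recursion built on the family $(\rho_n)$ of Lemma~\ref{lem_UTB_family}, arranging that the property $\delta$ survives every step. Take $\gamma_0$ to be a fixed \emph{standard} partial truth predicate, say $\gamma_0:=\rho_0$; being standard and arithmetical it satisfies $\delta$ in $(M,T)$ by hypothesis, so $r(\gamma_0)\neq-\infty$. For the successor step I would exhibit a primitive recursive syntactic operation $F$ and put $\gamma_{i+1}:=F(\gamma_i)$. Since the rank function is not definable in $(M,T)$ --- it takes values in the \emph{external} $\omega\cup\{\pm\infty\}$ --- one cannot argue by a least counterexample; instead the lemma is reduced to the instance-wise statement
\begin{displaymath}
r(\gamma)<+\infty\ \Longrightarrow\ r(F(\gamma))>r(\gamma)\qquad\text{for every }\gamma\in\Form(M),
\end{displaymath}
which, applied to $\gamma=\gamma_a$, yields the conclusion for all $a\in M$ at once.

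Two things must be arranged about $F$, and I would treat them separately. The first: $F(\gamma)$ must satisfy $\delta$ \emph{whatever $\gamma$ is}, not merely when $\gamma$ already does. This is forced, because for a nonstandard index $a$ the predicate $\gamma_a$ will disquote correctly for \emph{all} standard formulae, so $r(\gamma_a)\in\{-\infty,+\infty\}$, and then $r(\gamma_a)<+\infty$ says exactly that $\gamma_a$ fails $\delta$, while we still need $r(\gamma_{a+1})=+\infty$, in particular $\gamma_{a+1}$ satisfying $\delta$. This is the point where the hypothesis that \emph{every} standard arithmetical formula satisfies $\delta$ does real work, and it is the delicate part: one assembles $F(\gamma)$ out of $\gamma$ together with the standard disquotational guards $\xi_j$ so that, using the compositional axioms to rewrite $T\delta[F(\gamma)]$ as the value of $\delta$ under the arithmetical predicate $x\mapsto TF(\gamma)(\num{x})$ (legitimate since $\delta$ is a fixed \emph{standard} formula), the behaviour of that predicate relevant to $\delta$ is pinned down, $\CT^-$-provably, by genuine partial truth predicates, which are standard and hence satisfy $\delta$. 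This is essentially Smith's construction in \cite{Smith}, and making it work for nonstandard $\gamma$ is the principal obstacle I anticipate.

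The second: the rank increase proper. When $r(\gamma)=n$ is finite, $F(\gamma)$ must disquote correctly for $\chi_0,\dots,\chi_{n+1}$; together with the first point this gives $r(F(\gamma))\geq n+1>n$. Here $F$ is the \emph{positive} substitution of Lemma~\ref{lem_UTB_family}: replace a suitable boolean subformula $\beta$ of $\gamma$ by $\beta\wedge\xi$ or $\beta\vee\xi$ for the next guard $\xi$ --- one or two such steps, as in the passage $\rho_{2k}\to\rho_{2k+1}\to\rho_{2k+2}$ --- whose effect, by the finite boolean-calculus argument of that lemma formalised in $\CT^-$, is to leave the disquotational behaviour of $\gamma$ on the formulae it already handles untouched while forcing the next one to come out right. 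Done for each standard $k$ this gives a scheme of $\CT^-$-provable statements, and the instance $k=n$ finishes this half. One subtlety: $\gamma$ may, by accident, already handle more of the enumeration than its construction stage guarantees, so $F$ must repair the \emph{first} failure of $\gamma$ rather than a fixed $\chi_{i+1}$ --- arranged either by choosing the primitive recursive enumeration $(\chi_i)$ so that $\gamma_i$ provably fails $\chi_{i+1}$, or by a bounded search for the first failure built into $F$. Finally the case $r(\gamma)=-\infty$ of the displayed implication is covered by the first point alone ($F(\gamma)$ still satisfies $\delta$, so $r(F(\gamma))\geq 0>-\infty$), and the lemma follows.
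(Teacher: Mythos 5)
Your skeleton is right --- a primitive recursive family together with a single-step implication ``$r(\gamma_a)<+\infty$ implies $r(\gamma_{a+1})>r(\gamma_a)$'', split into the two obligations that $\gamma_{a+1}$ satisfy $\delta$ unconditionally and that it disquote one more formula than $\gamma_a$ --- but the proof has a genuine gap at exactly the point you flag as ``the principal obstacle'': the construction of $F$ is never given, and the mechanism you sketch for the rank increase would not work. You propose to obtain $\gamma_{a+1}$ by a \emph{positive substitution into} $\gamma_a$, grafting the next guard $\xi$ onto a boolean subformula as in the passage $\rho_{2k}\to\rho_{2k+1}\to\rho_{2k+2}$. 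That manoeuvre works for the $\rho$'s precisely because they are, by construction, positive combinations of the guards with completely controlled behaviour on the codes $\chi_j(\bar{t})$ they do not yet handle; an arbitrary $\gamma_a$ produced by iterating $F$ (in particular a nonstandard one) has no such structure, its first disquotational failure can go in either direction, and conjoining or disjoining one more guard will not in general repair it while preserving what it already gets right. Relatedly, your claim that $r(\gamma_a)\in\{-\infty,+\infty\}$ for nonstandard $a$ is unjustified --- that is a special property of the family $(\rho_n)$ of Lemma \ref{lem_UTB_family}, not of whatever family you build; indeed the infinite-descent argument in the proof of Lemma \ref{lem_main} is driven precisely by nonstandard indices of \emph{finite} rank.

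The paper's construction resolves both obligations by \emph{replacement} rather than repair. One sets $\gamma_{i+1}=\bigl(\delta[\gamma_i]\to\alpha_{i,i}\bigr)$, where the $\alpha_{i,j}$ implement, by a definable case distinction of depth $i$, exactly the ``bounded search for the first failure'' you mention: $\alpha_{i,j+1}$ tests whether $\gamma_i$ disquotes $\chi_{i-(j+1)}$ correctly and, at the first failure, discards $\gamma_i$ entirely in favour of the honest standard predicate $\rho_{2(i-(j+1))}$. The payoff is that in every case $\gamma_{a+1}$ is $T$-provably coextensive with a \emph{standard} formula --- with $x=x$ when $\delta[\gamma_a]$ fails, with $\rho_{2n}$ when $r(\gamma_a)=n$ --- so that $\delta[\gamma_{a+1}]$ follows from the hypothesis on standard formulae via Lemma \ref{lem::techn_stand_ext}, and the rank increase is inherited from the disquotational properties of $\rho_{2n}$. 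Without this ``coextensive with a standard formula'' step your first obligation has no proof, and without wholesale replacement your second obligation fails for uncontrolled $\gamma$; so the proposal, as it stands, does not close.
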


The idea of the above lemma is extremely simple: if we consider a natural function measuring how close formulae come to satisfy the main lemma, it turns out that for some carefully chosen family $(\gamma_i)$ this function is locally increasing as long, as it does not hit its maximum. Let us first prove the main lemma using Rank Lemma. Then we shall present the proof of Rank Lemma, since it is admittedly more technical. 
	
	\begin{proof}[Proof of the Main Lemma assuming Rank Lemma]
		Let $(M,T) \models \CT^-$ be an arbitrary nonstandard model. Let $r, \gamma_i$ satisfy the assumptions of the the Rank Lemma. Note that by definition of the rank function a formula $T'(x) :=T\phi(\num{x})$ satisfies the main lemma exactly when $r(\phi) = +\infty.$ We will show that there exists $a \in M$ such that $r(\gamma_a) = + \infty.$
		
		Suppose otherwise. Fix an arbitrary nonstandard $a \in M$. If there is no $b \in M$ with $r(b) = + \infty,$ then by Rank Lemma we have:
		\begin{displaymath}
		r(a) > r(a-1)> r(a-2) > \ldots
		\end{displaymath}  
		which forms an infinite descending chain in the well-order $\{-\infty \} \cup \omega.$ So by contradiction there exists some $a \in M$ such that $r(a) = +\infty.$
	\end{proof}
	
	Before proving Rank Lemma it will be convenient to isolate one easy technical (sub)lemma:
	
	\begin{lem}\label{lem::techn_stand_ext}
		Let $(M,T)\models \CT^-$. Let $\phi(x)$ be a standard arithmetical formula with one free variable and $\gamma(x)$ an arbitrary formula from $\Form(M)$. Suppose that
		\begin{equation}\label{equat::base}
		(M,T)\models \forall t \ \ \phi(t)\equiv T\gamma(t)
		\end{equation}
		Then for an arbitrary standard formula $\delta(x_1,\ldots, x_n)$ of $\mathcal{L}_{\PAP}$ we have
		\[(M,T)\models \forall \bar{t}\ \ \biggl(\delta[\phi](\bar{t})\equiv T\delta[\gamma](\bar{t})\biggr).\]
	\end{lem}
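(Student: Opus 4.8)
The plan is to establish the stated equivalence by an (external) induction on the complexity of the \emph{standard} formula $\delta$. Since $\delta$ is standard, this induction runs over a finite, standard skeleton, even though the substituted formula $\delta[\gamma]$ may contain the possibly nonstandard $\gamma$; and because $\gamma\in\Form(M)$ and $\bar t$ are closed terms, $\delta[\gamma](\bar t)$ is, provably in $\PA$ and hence in $M$, a sentence of $M$, so the compositional clauses of $\CT^-$ apply to it at each step. As $\delta$ and $\phi$ are both standard, the assertion $(M,T)\models\forall\bar t\,\bigl(\delta[\phi](\bar t)\equiv T\delta[\gamma](\bar t)\bigr)$ is just the statement that for every tuple $\bar t$ of closed terms of $M$ one has $M\models\delta[\phi](\bar t)$ iff $(M,T)\models T(\delta[\gamma](\bar t))$, and this is what I would verify by induction. (One may equally read the displayed formula internally, via the $\PA$-definable partial truth predicate for the standard formula $\delta[\phi]$; the argument is identical.)

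For the base cases: if $\delta$ is an atomic $\mathcal{L}_{\PA}$-formula it contains no occurrence of $P$, so $\delta[\phi]=\delta[\gamma]=\delta$ and $\delta(\bar t)$ is a closed atomic sentence, whence the equivalence is immediate from clause (a) of $\CT^-$; if $\delta$ is a $P$-atom $P(s)$ with $s$ a term in the $x_i$, then $\delta[\phi](\bar t)=\phi(s(\bar t))$ and $\delta[\gamma](\bar t)=\gamma(s(\bar t))$, and since $s(\bar t)$ is again a closed term this is precisely an instance of hypothesis \eqref{equat::base}.

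For the connective cases $\neg,\wedge,\vee$ I would push $T$ through using clauses (c) and (b) of $\CT^-$ and then apply the induction hypothesis to the immediate subformulae (which are standard and of lower complexity). The quantifier case is the one requiring care and is the main obstacle. For $\delta=Qx_{n+1}\,\delta_1(x_1,\dots,x_{n+1})$ I would first use clause (d) of $\CT^-$ to rewrite $(M,T)\models T\bigl(Qx_{n+1}\,\delta_1[\gamma](\bar t,x_{n+1})\bigr)$ as $(M,T)\models Qs\,T\bigl(\delta_1[\gamma](\bar t,s)\bigr)$, with $s$ ranging over closed terms of $M$; then apply the induction hypothesis to $\delta_1$ — which has one more free variable but is still standard — to replace $T(\delta_1[\gamma](\bar t,s))$ by $\delta_1[\phi](\bar t,s)$ uniformly in closed $s$; and finally observe that, for a standard arithmetical formula, quantifying over all closed terms of $M$ is equivalent in $M$ to quantifying over all elements of $M$, the non-immediate direction using that every $a\in M$ is the value of its numeral $\num{a}$. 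This last step is what reconnects the term-quantifier supplied by $\CT^-$ to an honest first-order quantifier and closes the induction. Beyond this bookkeeping — keeping track that the substituted objects $\delta_i[\gamma](\bar t,\dots)$ stay (codes of) sentences of $M$, and that the external induction is legitimate because $\delta$ is standard — I anticipate no genuine difficulty.
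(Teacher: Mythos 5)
Your proof is correct and follows essentially the same route as the paper's: induction on the complexity of the standard formula $\delta$, with hypothesis \eqref{equat::base} handling the $P$-atoms and the atomic and compositional axioms of $\CT^-$ handling the arithmetical atoms, connectives and quantifiers. The care you take in the quantifier case --- converting the term-quantifier supplied by $\CT^-$ back into a genuine first-order quantifier via numerals --- is exactly the detail the paper's two-line proof leaves implicit.
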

	\begin{proof}
		By induction on the complexity of $\delta$. In the base step we use \eqref{equat::base} (for $\delta = P(x)$) and the fact that $\CT^-\vdash\UTB^-$ (if $\delta$ is atomic $\mathcal{L}_{\PA}$- formula). In the induction step we use compositional axioms of $\CT^-$.
	\end{proof}

	\begin{proof}[Proof of Rank Lemma]
		Let $(M,T) \models \CT^-$ be an arbitrary model. We have to define a primitive recursive sequence of formulae $(\gamma_i)$ satisfying the assumptions of Rank Lemma. Recall that $(\chi_i)$ was an arbitrary primitive recursive enumeration of all arithmetical formulae with at most one free variable and that $(\rho_i)$ was a primitive recursive sequence of arithmetical partial truth predicates such that  $T\rho_a(\num{x})$ satisfies the full uniform disquotation scheme for any nonstandard $a$. Let us define the following sequence of formulae
			\begin{eqnarray*}
				\gamma_0(x) & = & (x=x) \cr
				\gamma_{i+1}(x) & = & \delta[\gamma_i] \longrightarrow \alpha_{i,i}(x) \cr
				\alpha_{i,0}(x) & = & \rho_{2i}(x) \cr
				\alpha_{i,j+1}(x)	& = & \bigl( \forall \bar{t} \ \ \gamma_i(\chi_{i-(j+1)}(\bar{t})) \equiv \chi_{i-(j+1)}(\bar{t}^{\circ}) \bigr) \wedge \alpha_{i,j}(x) \cr
				& & \vee  \neg \bigl( \forall \bar{t} \ \ \gamma_i(\chi_{i-(j+1)}(\bar{t})) \equiv \chi_{i-(j+1)}(\bar{t}^{\circ} )\bigr) \wedge \rho_{2(i-(j+1))}(x), \cr
			\end{eqnarray*}
	where $j+1 \leq i.$ Obviously, this definition may be formalised in $\PA$. We will show that the sequence $(\gamma_i)$ satisfies Rank Lemma. Take an arbitrary $a.$ Suppose that
	\begin{displaymath}
	r(\gamma_a) = - \infty,
	\end{displaymath}
	i.e. 
	\begin{displaymath}
	(M,T) \models \neg T \delta[\gamma_a].
	\end{displaymath}
	Then by definition of $\gamma_{a+1}$ we have
	\begin{displaymath}
	(M,T) \models \forall t \ T \gamma_{a+1}(t).
	\end{displaymath}
	Thus we obtain
	\begin{displaymath}
	(M,T) \models \forall t\ \ \Big( (t=t) \equiv T\gamma_{a+1}(t) \Big)
	\end{displaymath} 
	and consequently (by Lemma \ref{lem::techn_stand_ext} for $\phi(x) := (x=x)$ and the assumption that every standard formula has the property $\delta$)
	\begin{displaymath}
	(M,T) \models T \delta[\gamma_{a+1}].
	\end{displaymath}
	So $r(\gamma_{a+1}) > - \infty.$

	Now, we have to show the key part of the proof. Suppose that 
	\begin{displaymath}
	r(\gamma_a) = n \in \omega.
	\end{displaymath} 	
	This means that for all $i<n$ we have
	\begin{displaymath}
	(M,T) \models \forall \bar{t}\ \ T\gamma_a(\num{\chi_i(\bar{t})}) \equiv \chi_i(\bar{t}^{\circ}), 
	\end{displaymath}
	but the equivalence does not hold for $i = n.$ Now, the formulae $\alpha_{i,j}$ are designed precisely so that they measure when the formula $T \gamma_i$ fails to respect the uniform disquotation scheme and replaces it with a formula of slightly higher rank. In our case we have:
	\begin{displaymath}
	(M,T) \models  \neg T \bigl( \forall \bar{t} \ \ \gamma_a(\num{\chi_{a-(a-n)}}(\num{\bar{t}})) \equiv \chi_{a-(a-n)}(\bar{\num{t^{\circ}}})\bigr)
	\end{displaymath}
	This means that for $i = a$ and $j+1 = a-n$ we get by definition of $\alpha_{i,j}$:
	\begin{displaymath}
	(M,T) \models T \forall x \Big(\alpha_{a,a-n}(x) \equiv \rho_{2n}(x)\Big). 
	\end{displaymath}
	But then the formula $\alpha_{a,a-n}$ satisfies the uniform disquotation scheme for $\chi_0, \ldots, \chi_n$ and therefore by definition of $\alpha_{a,j}$ we have the chain of equivalences:
	\begin{displaymath}
	(M,T) \models T \forall x \Big( \big(\alpha_{a,a-n}(x) \equiv \alpha_{a,a-n+1}(x)\big) \wedge \ldots \wedge \big(\alpha_{a,a-1}(x) \equiv \alpha_{a,a}(x)\big)  \Big).
	\end{displaymath}
	So by definition of  $\gamma_{a+1}$  it turns out that
	\begin{displaymath}
	(M,T) \models T \forall x \Big(\gamma_{a+1}(x) \equiv \rho_{2n}(x)\Big).
	\end{displaymath}
	The formula $\rho_{2n}$ is standard and all standard formulae have the property $\delta$. So we have
	\begin{displaymath}
	(M,T) \models T\delta[\rho_{2n}].
	\end{displaymath}
	This together with the fact that the formula $\rho_{2n}$ satisfies the uniform disquotation scheme for $\chi_0, \ldots, \chi_{n}$ means that $r(\gamma_{a+1}) > r(\gamma_a).$
	\end{proof}

	We are almost ready to prove the main theorem. Unfortunately, we still have to consider one very technical issue. In our proof we would like to use at some point a property of \df{generalised commutativity} of compositional truth predicates. I.e. for a compositional truth predicate $T'$ we would like to have the following equivalence for all standard formulae $\phi$ from $\mathcal{L}_{\PAP}$ (i.e. formulae from the arithmetical language possibly enriched by one fresh unary predicate $P(v)$) and (possibly nonstandard) arithmetical formulae $\eta$ 
	\begin{displaymath}
	T' \phi[\eta] \equiv \phi[T'\eta].
	\end{displaymath}	
	Unfortunately, this clean equivalence is not literally true. Namely, we have, e.g.
	\begin{displaymath}
	T' \Big(\exists x \ \ x=x \Big) \equiv \exists t \ T'(t=t)
	\end{displaymath}
	and the last sentence is simply not the same as
	\begin{displaymath}
	\exists x \ T'(x=x),
	\end{displaymath}
	since $x=x$ is a formulae with a free variable rather than a sentence. Thus in order to use some form of the generalised compositionality we have to make some technical amendments. There is a couple of ways this might be done. We decide for the one which we believe makes our considerations most perspicuous.
	
	\begin{defin}
		Let $\phi(x_1, \ldots, x_n)$ be a standard formula from the language $\mathcal{L}_{\PAP}$. We say that $\phi$ is \df{semirelational} if it has no subformula of the form $P(t)$, where $t$ is some term other than a first order variable. 
	\end{defin}
	
	In what follows we will restrict our attention to semirelational formulae. We can do this without loss of generality thanks to the next lemma.
	
	\begin{lem} [Semirelational normal form]
		Let $\phi(x_1, \ldots, x_n)$ be an arbitrary formula from $\mathcal{L}_{\PAP}$. Then there exists a semirelational formula $\phi'(x_1, \ldots, x_n)$ in the same language such that
		\begin{displaymath}
		 \models \forall P \ \forall x_1, \ldots, x_n \big(\phi(x_1,\ldots, x_n) \equiv \phi'(x_1,\ldots, x_n)\big).
		\end{displaymath}
	\end{lem}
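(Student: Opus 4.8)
The plan is to remove, one occurrence at a time, every subformula of $\phi$ of the form $P(t)$ in which $t$ is not a first-order variable, rewriting each such $P(t)$ as $\exists y\,(y = t \wedge P(y))$ for a freshly chosen variable $y$. First I would note that in $\mathcal{L}_{\PAP}$ the symbol $P$ is a predicate, not a function symbol, so every term $t$ occurring in $\phi$ is an ordinary arithmetical term and contains no occurrence of $P$. Hence after replacing $P(t)$ by $\exists y\,(y=t\wedge P(y))$, the auxiliary atom $y=t$ contains no $P$ at all, and the only new subformula of the form $P(s)$ is $P(y)$, whose argument $y$ is a variable. Thus each rewriting step strictly decreases the (finite) number of ``bad'' occurrences of $P$ — occurrences of $P(t)$ with $t$ not a variable — and creates no new ones, so after finitely many steps we reach a semirelational $\phi'$. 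Equivalently one can phrase this as an induction on the complexity of $\phi$: atomic arithmetical formulae and atoms $P(x_i)$ with $x_i$ a variable are already semirelational; an atom $P(t)$ with $t$ a non-variable term becomes semirelational after the single replacement above; and the boolean connectives and quantifiers are handled by applying the induction hypothesis to the immediate subformulae. Note also that the replacement adds only bound variables, so the free variables of $\phi'$ are still exactly $x_1,\ldots,x_n$.

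For the equivalence I would argue purely on the level of first-order logic. Fix any structure, any interpretation of $P$, and any assignment to the free variables. The term $t$ denotes a unique element $a$, and $P(t)$ holds precisely when $P(a)$ holds, which is precisely when $\exists y\,(y=t\wedge P(y))$ holds. Since logically equivalent subformulae may be substituted salva veritate, each single rewriting step preserves the truth value of the whole formula under every interpretation of $P$ and every assignment; composing the finitely many steps yields $\models\forall P\,\forall x_1,\ldots,x_n\,(\phi\equiv\phi')$, which is exactly the assertion of the lemma. If one additionally wants the map $\phi\mapsto\phi'$ to be available \emph{inside} $\PA$ (which is convenient later, when one substitutes possibly nonstandard arithmetical $\eta$ into $\phi$ via the operation $\phi[\eta]$), one observes that the recursion described above is primitive recursive and that the underlying equivalence is provable in pure predicate logic, hence formalizable; but for the lemma as stated it is enough to treat standard $\phi$ at the metalevel.

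The only genuine point requiring care — and it is really the sole obstacle — is the management of bound variables: the fresh $y$ must be chosen outside the finite set of variables occurring in $\phi$, and when a replacement is performed inside the scope of other quantifiers one must guarantee that no variable occurring in $t$ is captured by the new $\exists y$ or by a renamed quantifier. This is handled by the same convention of prior renaming of bound variables that was already used in the definitions of $\delta[\phi]$ and of $\phi[\xi\mapsto\delta]$ earlier in the paper. With that convention fixed, the induction (or the step-counting argument) goes through routinely, and no further subtleties arise.
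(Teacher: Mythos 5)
Your proposal is correct and is exactly the paper's argument: the paper's (one-line) proof also replaces each occurrence of $P(t)$ with $\exists v\,(v=t \wedge P(v))$ for a fresh $v$ chosen to avoid variable clashes. Your additional care about termination, capture-avoidance, and the semantic justification of each rewriting step merely fills in details the paper leaves as "completely straightforward."
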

	
	The proof of the above lemma is completely straightforward. We replace all the expressions of the form $P(t)$ with $\exists v \ ( v=t \wedge P(v))$ with $v$ chosen so that we can avoid clashes of variables. 
	
	\begin{defin}
		Let $(M,T)$ be any model of $\CT^-$ and let $\Phi$ be any subset of $\Form (M).$ We say that a formula $T'(x)$ is \df{extensional} for formulae in $\Phi$ if for all $\phi \in \Phi$ we have
		\begin{displaymath}
		\bar{s}^{\circ} = \bar{t}^{\circ} \rightarrow \big(T'\phi(\bar{s}) \equiv T'\phi(\bar{t}) \big).
		\end{displaymath}
	\end{defin}
	
	Extensionality is a very important property of a truth predicate which may fail in the absence of induction. One of its consequences is the following equivalence
	\[\forall \phi\ \ \forall t T\phi(t)\equiv \forall x T\phi(\num{x})\]
	which does not trivialise if the language of $\PA$ is assumed to have function symbols for addition and multiplication. One may show that the above equivalence is independent from the axioms of $\CT^-$. That's why we will additionally demand this property from our partial truth predicates.

	\begin{lem}[Generalised commutativity] \label{lem_generalised_com}
		Let$(M,T)$ be any model of $\CT^-$. Suppose that $T'(x)$ satisfies compositional axioms and is extensional for all formulae of the form $\phi[\xi],$ where $\phi$ is a standard semirelational formula from the language $\mathcal{L}_{\PAP}$ and $\xi \in \Form(M)$ is an arbitrary arithmetical formula, possibly nonstandard, with at most one free variable. Let us define 
		\begin{displaymath}
		\hat{\xi}(x) := T'\xi(\num{x}).
		\end{displaymath}
		Then the following equivalence holds
		\begin{displaymath}
		(M,T) \models \forall x_1, \ldots, x_n \Big( T' \phi[\xi](\num{x_1}, \ldots, \num{x_n}) \equiv \phi[\hat{\xi}](x_1, \ldots, x_n) \Big).
		\end{displaymath}
	\end{lem}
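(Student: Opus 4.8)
The plan is to argue by external induction on the build-up of the standard semirelational formula $\phi$. Fix $(M,T)\models\CT^-$ and $T'$ as in the hypotheses, fix an arbitrary arithmetical $\xi\in\Form(M)$ (possibly nonstandard) with at most one free variable, and set $\hat\xi(x):=T'\xi(\num{x})$. Since $\phi$ is standard, its syntactic skeleton is genuinely finite, so an external induction over it is legitimate; at each stage I would apply the compositional axioms satisfied by $T'$ to the (generally nonstandard) formula $\phi[\xi]$ inside $M$, the point being that the portion of $\phi[\xi]$ which gets decomposed is exactly the standard skeleton of $\phi$, while all nonstandard material is confined to the finitely many copies of $\xi$ occupying the former $P$-slots.

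For the base cases: if $\phi$ is an arithmetical atomic formula $R(s_1,s_2)$ with $R$ among $=,\leq$, then $P$ does not occur, so $\phi[\xi]=\phi=\phi[\hat\xi]$, and applying the atomic compositional axiom for $T'$ to the closed terms obtained from $s_1,s_2$ by substituting numerals for the variables, together with the (provable in $\PA$) fact that evaluating a term at numerals $\num{x_i}$ returns the value of that term at $x_1,\ldots,x_n$, gives $T'\phi(\num{x_1},\ldots,\num{x_n})\equiv\phi(x_1,\ldots,x_n)$. If $\phi$ is $P(x_i)$ --- this is the one place semirelationality is used, guaranteeing that the argument of $P$ is a bare variable --- then $\phi[\xi](\num{x_1},\ldots,\num{x_n})$ is literally $\xi(\num{x_i})$, so $T'\phi[\xi](\num{x_1},\ldots,\num{x_n})$ is $\hat\xi(x_i)$, which is exactly $\phi[\hat\xi](x_1,\ldots,x_n)$: the same formula on both sides. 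The boolean steps $\phi=\psi_1\wedge\psi_2$, $\psi_1\vee\psi_2$, $\neg\psi$ are equally routine: the substitutions $[\xi]$ and $[\hat\xi]$ commute with the connective, so I would apply the matching compositional clause for $T'$, then the induction hypothesis to the immediate subformulae, then reassemble.

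The real work is the quantifier step; take $\phi=\exists v\,\psi$, the universal case being dual. Writing $\theta(v):=\psi[\xi](v,\num{x_1},\ldots,\num{x_n})$, the sentence $\phi[\xi](\num{x_1},\ldots,\num{x_n})$ is $\exists v\,\theta(v)$, so the compositional quantifier axiom for $T'$ yields
\[
T'\phi[\xi](\num{x_1},\ldots,\num{x_n})\equiv\exists t\ T'\theta(t),
\]
where $t$ ranges over codes of closed terms. To bring the induction hypothesis --- which speaks of numerals, i.e.\ of arbitrary elements of $M$ --- to bear, I need to replace $\exists t\,T'\theta(t)$ by $\exists y\,T'\theta(\num{y})$. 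For each closed term $t$ the tuples $(t,\num{x_1},\ldots,\num{x_n})$ and $(\num{t^{\circ}},\num{x_1},\ldots,\num{x_n})$ have equal evaluations, and $\theta=\psi[\xi]$ is one of the formulae for which $T'$ is assumed extensional, so $T'\theta(t)\equiv T'\theta(\num{t^{\circ}})$; since $t\mapsto t^{\circ}$ maps the closed terms of $M$ onto all of $M$, this gives $\exists t\,T'\theta(t)\equiv\exists y\,T'\theta(\num{y})$. The induction hypothesis applied to $\psi$ then rewrites the right-hand side as $\exists y\,\psi[\hat\xi](y,x_1,\ldots,x_n)$, which is $\phi[\hat\xi](x_1,\ldots,x_n)$ once bound variables are renamed so that $v$ avoids the internal bound variables of $\hat\xi$.

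I expect the single genuine obstacle to be exactly this last reduction. The compositional axiom hands us only a witnessing \emph{closed term}, not a witnessing numeral, and --- as the remark preceding the lemma stresses --- in the absence of induction a closed term and the numeral of its value need not be interchangeable under $T'$; extensionality of $T'$ on the formulae $\psi[\xi]$ is precisely what repairs this. Restricting to semirelational $\phi$ is what keeps the $P$-atomic base case trivial, so that extensionality is ever invoked only for formulae $\psi[\xi]$ with $\psi$ a subformula of $\phi$ --- which are exactly the ones the hypothesis of the lemma makes available. Everything else is bookkeeping about substitution and numerals.
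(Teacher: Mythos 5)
Your proposal is correct and follows exactly the route the paper takes: the paper's entire proof is the single sentence ``We check the claim by induction on the complexity of $\phi$,'' and your write-up is precisely that induction carried out in detail, with the quantifier step (using extensionality of $T'$ on $\psi[\xi]$ to pass from witnessing closed terms to witnessing numerals) and the $P(x_i)$ base case (where semirelationality is used) identified as the only non-routine points --- which is indeed why those two hypotheses appear in the statement.
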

	
	\begin{proof}
		We check the claim by induction on the complexity of $\phi.$
	\end{proof}
		
	Now we are ready to prove our theorem. As a matter of fact, we shall obtain slightly stronger result. The predicate $T$ we are going to construct is going to display an additional property that 
	$$(M,T) \models \UTB $$ 
	is recursively saturated \emph{as a model of \UTB}.
	
	\begin{proof}
		Let $(M,T) \models \CT^-$ and let  $\tilde{\theta}(y)$ be a formula '$P(x)$ is a compositional extensional truth predicate for formulae $<y$' i. e. a conjunction of the following formulae:
		
		\begin{enumerate}
			\item $\forall \phi < y  \ \ P(\neg \phi) \equiv \neg P(\phi).$
			\item $\forall \phi, \psi < y \ \ P(\phi \odot \psi) \equiv P(\phi) \odot P(\psi).$
			\item $\forall \phi < y \ \ P(Q x \phi) \equiv Qt \ P(\phi(t)),$		
 			\item $\forall \phi< y \forall s,t \ \ \Big( s^{\circ} = t^{\circ} \rightarrow (P\phi(s) \equiv P\phi(t)\Big)$, 
		\end{enumerate}
	where $\odot \in \{\wedge, \vee \}, Q \in \{\forall, \exists \}.$
		Let $\theta$ be a sentence '$\tilde{\theta}$ is inductive' i.e.
		
		$$ \forall x \Bigl( \tilde{\theta}(x) \rightarrow \tilde{\theta}(x+1) \Bigr) \longrightarrow 
		\Bigl( \tilde{\theta}(0) \rightarrow \forall x \tilde{\theta}(x) \Bigr).$$
		
		Let now $(ind_k)$ be some recursive enumeration of the  instances of the induction scheme for semirelational formulae in the arithmetical language with the additional predicate $P(x)$. For an arbitrary formula $\phi$ with at most one free variable let $Ind_k (\phi)$ be a conjunction of the first $k$ instances of the form $ind_j[\phi]$ with parentheses grouped to the right. The reader should not be confused with the fact, that we used a predicate $P$ to define $Ind_k(\phi).$ It does not occur in our formula anymore.
		
		Let $\tilde{\zeta}(x)$ be defined as
		
		$$\forall y< x \ \ P(Ind_y(\rho_y)).$$
		
		So it is a formula saying ''$P$ holds of the formula 'the formula $\rho_y$ satisfies first $y$ instances of the induction scheme' for all $y$ smaller than $x$''. In other words ''the formulae $Ind_y(\rho_y)$ are true for all $y<x$'', since the intended meaning of the predicate $P(x)$ is the truth predicate. Let $\zeta$ be defined in an analogous fashion to $\theta$ i.e.
		
		$$\zeta = \forall y \Bigl( \ \tilde{\zeta}(y) \rightarrow \tilde{\zeta}(y+1) \Bigr) 
							\rightarrow \Bigl( \tilde{\zeta}(0) \rightarrow \forall y \ \ \tilde{\zeta}(y) \Bigr).$$
							
		Let finally 
		
		$$\delta = \zeta \wedge \theta.$$
		
		Observe that every standard formula $\phi$ has the property $\delta,$ since $\delta[\phi]$ is simply an instance of the induction scheme. So by our lemma there is a formula $T'(x)$ such that
		
		$$(M,T) \models \delta[T'] $$  
		and $T'$ satisfies uniform disquotation scheme. Since it satisfies the scheme, it is compositional for standard formulae, i.e. for all $k \in \omega$
		
		$$(M,T) \models \tilde{\theta}[T'](k)$$
		So, by overspill ($T'$ satisfies $\theta$!) we have 
		$$(M,T) \models \tilde{\theta}[T'](c)$$ for some $c > \omega.$
		
		Now, since $\rho_k$ for $k \in \omega$ are standard formulae, they satisfy full induction scheme. In particular 
		$$(M,T) \models \tilde{\zeta}[T'](k).$$
So applying overspill ($T'$ satisfies $\zeta$!) once more we get some nonstandard $d$ such that for all $e<d$ we have
		$$(M,T) \models \tilde{\zeta}[T'](e).$$
Let us fix a nonstandard $e<d,c$ which is much smaller than $c$, so that it satisfies all the inequalities needed in the further part of the proof. Let us list these inequalities right now, although it is not essential to understand their role in advance. We also assume that we use a coding under which if $\phi$ is a subformula of $\psi$,  then the code of $\phi$ is smaller than the code of $\psi$. The reader is  warned that if we assume that $e$ is nonstandard then actually all the three points follow from the second one:
\begin{enumerate}
	\item $\rho_e < c.$
	\item $Ind_e(\rho_e) <c.$
	\item For all standard formulae $\eta$ we have $\eta[\rho_e] < c.$
\end{enumerate} 

Let $T''(x)$ be defined as $T'(\rho_e(\num{x}))$. We claim that $$(M,T'') \models \UTB. $$ 

Since $e$ is much smaller than $c$, we may assume that
\begin{displaymath}
\rho_e < c,
\end{displaymath}
 so that our predicate is compositional and we may show that it satisfies the uniform disquotation scheme in exactly the same way as in the case of  $\CT^-.$  It is enough to show that it satisfies the full induction scheme, that is we have to show that
\begin{equation} \tag{*} \label{equat_ind}
(M,T) \models \forall x \Big(\phi[T''](x) \rightarrow \phi[T''](Sx)\Big) \rightarrow \Big(\phi[T''](0) \rightarrow \forall x \ \phi[T''](x) \Big)
\end{equation}
for an arbitrary standard semirelational $\phi$ from $\mathcal{L}_{\PAP}.$ Note that for some $k \in \omega$ the following equality holds:
\begin{displaymath}
 ind_k[\rho_e] =\forall x \Big(\phi[\rho_e](x) \rightarrow \phi[\rho_e](Sx)\Big) \rightarrow \Big(\phi[\rho_e](0) \rightarrow \forall x \ \phi[\rho_e](x) \Big).
\end{displaymath}
 By assumption we have
$$(M,T) \models T' Ind_e (\rho_e).$$
Now, $T'$ is compositional for formulae $<c$ and we assumed that $Ind_e(\rho_e) < c$ and since $k \in \omega$, the formula $ind_k$ is located in the formula $Ind_d$ on finite syntactic depth. Thus we get:
\begin{displaymath}
(M,T) \models T' \biggl(\forall x \Big(\phi[\rho_e](x) \rightarrow \phi[\rho_e](Sx)\Big) \rightarrow \Big(\phi[\rho_e](0) \rightarrow \forall x \ \phi[\rho_e](x) \Big)\biggr).
\end{displaymath}
By compositionality of $T'$ and Lemma \ref{lem_generalised_com} for $\hat{\xi} = T''$ this implies \eqref{equat_ind}.

	\end{proof}	
		
		An inspection of the proof shows that the $\UTB$ predicate we have defined is of the form 
		$$ T \gamma_a (\num{\rho_b(\num{x})}),$$ for some nonstandard $a,b.$ Thus by Theorem 3.1 of \cite{Smith} the model we defined is recursively saturated. Also by inspection of proof, we obtain the following corollary:
		
		\begin{stw}
		For any $(M,T) \models \UTB,$ there exists $T' \subset M$ such that $(M,T') \models \UTB$ is recursively saturated as a model of $\UTB.$
		\end{stw}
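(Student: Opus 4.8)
The plan is to observe that, in the proof of the Theorem, the hypothesis $(M,T)\models\CT^-$ enters essentially at a single point: the invocation of the Main Lemma, which produces a predicate $T'$ satisfying the prescribed property $\delta$ \emph{together with} the Uniform Disquotation Scheme. If instead $(M,T)\models\UTB$, then the \emph{given} predicate $T$ already enjoys both features: it satisfies uniform disquotation by $\UTB^-$, and for the particular $\delta=\zeta\wedge\theta$ built in the proof of the Theorem we have $(M,T)\models\delta[T]$, because $\delta[T]$ is a conjunction of instances of the induction scheme for $\mathcal{L}_{\PAP}$ with $P$ interpreted by $T$, i.e.\ a consequence of $Ind(T)$, and $\UTB=\UTB^-\cup Ind(T)$. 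So the plan is to skip the Main Lemma, the Rank Lemma and the $\CT^-$-content of Lemma \ref{lem_UTB_family} entirely, and to run the second half of the proof of the Theorem verbatim with $T$ itself in the role of the predicate that was there extracted from the Main Lemma.

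Concretely I would form $\tilde\theta$, $\theta$, the formulae $Ind_k(\cdot)$, $\tilde\zeta$, $\zeta$ and $\delta=\zeta\wedge\theta$ exactly as in the proof of the Theorem, and note that $\tilde\theta[T](k)$ and $\tilde\zeta[T](k)$ hold for every standard $k$: the former because uniform disquotation makes $T$ compositional and extensional on standard formulae, the latter because the sentences $Ind_y(\rho_y)$ with $y$ standard are true instances of arithmetical induction and $T$ disquotes true sentences. Applying the Overspill Lemma (Fact \ref{overspill}) inside $(M,T)$ to the $\mathcal{L}_{\PAT}$-formulae $\tilde\theta[T]$ and $\tilde\zeta[T]$ --- for both of which $(M,T)$, being a model of $\UTB$, satisfies induction --- I obtain a nonstandard $c$ such that $T$ is compositional and extensional for all formulae of code $<c$, together with a nonstandard bound below which $T(Ind_y(\rho_y))$ holds. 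Picking a nonstandard $e$ small enough relative to $c$ (one further use of Overspill, as in the Theorem) so that $\rho_e$, $Ind_e(\rho_e)$ and $\eta[\rho_e]$ for all standard $\eta$ have code $<c$, I put $T''(x):=T(\rho_e(\num{x}))$. That $(M,T'')\models\UTB$ is then verified word for word as in the Theorem: uniform disquotation for $T''$ from the compositionality of $T$ on formulae of code $<c$ together with the combinatorial argument in the proof of Lemma \ref{lem_UTB_family}, and full induction for $T''$ from $T(Ind_e(\rho_e))$ via Lemma \ref{lem_generalised_com} applied with $\hat{\rho_e}=T''$.

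For the additional clause --- that $(M,T'')$ is recursively saturated \emph{as a model of} $\UTB$ --- I would reason exactly as at the end of the Theorem's proof. A recursive type in $\mathcal{L}_{\PAT}$ over $(M,T'')$ may, after passing to semirelational normal form, be written $\{\phi_n[T''](x,\bar a)\}_{n\in\omega}$ with $\phi_n$ standard and arithmetical; since $\hat{\rho_e}=T''$, Lemma \ref{lem_generalised_com} rewrites $\phi_n[T''](x,\bar a)$ as $T(\phi_n[\rho_e](\num{x},\num{\bar a}))$, in which $T$ is applied only to codes of arithmetical sentences depending primitive-recursively on $n,x,\bar a,e$. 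Finite satisfiability of the type is then expressed by a single $\mathcal{L}_{\PAT}$-formula of $(M,T)$ whose $n$-th instance is true; as $(M,T)\models\UTB$ has induction for that formula, Overspill yields one element realizing the whole type. (Alternatively, since $T''$ has the form $T\rho_e(\num{x})$, one may simply cite Theorem 3.1 of \cite{Smith}.)

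The one thing that genuinely requires care --- and the place where the argument could fail if one is careless --- is the audit that every step of the ``arithmetic half'' of the Theorem (the passage from compositionality up to a nonstandard code to uniform disquotation of $T(\rho_e(\num{\cdot}))$, Lemma \ref{lem_generalised_com}, and the overspill for $\mathcal{L}_{\PAT}$-types just sketched) uses only $Ind(T)$ and uniform disquotation, and never the \emph{compositional} axioms of the ambient predicate $T$; this is so, but it must be checked, because Lemma \ref{lem_generalised_com} and the technical lemmas around it are stated under the blanket hypothesis $(M,T)\models\CT^-$. Once that audit is in place the corollary is immediate: replacing the output of the Main Lemma by the predicate $T$ supplied by $\UTB$ is its entire content.
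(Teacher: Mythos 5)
Your proposal is correct and follows essentially the same route as the paper: the paper's own (two-sentence) proof likewise observes that the main theorem's argument only needs a truth predicate compositional below some nonstandard element, which $\UTB$ supplies via overspill, and then reruns the tail of that proof. Your further observation that the Main Lemma and Rank Lemma can be skipped outright --- since $Ind(T)$ already yields $\delta[T]$ and $\UTB^-$ yields uniform disquotation --- is a correct minor streamlining of the same idea.
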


		\begin{proof}
		Note that the proof of our main theorem may be repeated with a weaker assumption that $(M,T)$ is a model of \emph{partial} compositional truth predicate, i. e. truth predicate compositional for formulae $ \leq c$ for some nonstandard $c \in M.$ But if $(M,T) \models \UTB$, then by overspill there exists some $b \in M$ such that $T\upharpoonright_b$ is partial compositional, where $T\upharpoonright_b$ is a restriction of the predicate $T$ to formulae smaller than $b.$ 
		\end{proof}

	There is also one corollary implicit in the proof of the theorem, that seems to be worth stating explicitly.
		
		\begin{stw}
		Let $(M,T)\models \CT^-.$ Then there exists a truth predicate for $M$ satisfying $\UTB$ which is definable in $(M,T)$ with parameters. 
		\end{stw}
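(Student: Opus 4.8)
The plan is to observe that the construction carried out in the proof of the Main Theorem is entirely internal to $(M,T)$: every object built along the way is either primitive recursive (hence arithmetically definable without parameters) or obtained by an application of overspill, which introduces a single nonstandard parameter. So the resulting $\UTB$-truth predicate $T'$ is in fact given by an explicit $\mathcal{L}_{\PAT}$-formula with parameters from $M$, and the statement follows by simply reading off that formula.

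More precisely, I would first recall from the proof of the Main Theorem (via the Main Lemma and the Rank Lemma) that the $\UTB$-predicate we produced has the form $T''(x) = T\gamma_a(\num{\rho_e(\num{x})})$ for suitable nonstandard $a, e \in M$, where $(\gamma_i)$ and $(\rho_i)$ are the fixed primitive recursive families of arithmetical formulae introduced in Lemma \ref{lem_UTB_family} and in the proof of the Rank Lemma. Since $(\gamma_i)$ and $(\rho_i)$ are primitive recursive sequences, the relations ''$z = \gamma_i$'' and ''$z = \rho_j$'' are definable in $\PA$ without parameters; hence the formula
$$T'(x) \ :=\ \exists z\,\exists w\ \bigl(z = \rho_e \wedge w = \mathit{Subst}(\gamma_a, \num{\mathit{Subst}(z,\num{x})}) \wedge T(w)\bigr)$$
is an $\mathcal{L}_{\PAT}$-formula whose only parameters are the two nonstandard numbers $a$ and $e$. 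By the proof of the Main Theorem, $(M, T')$ with $T' = \{\,m \in M : (M,T) \models T'(m)\,\}$ is a model of $\UTB$, which is exactly what is claimed.

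The one point that needs a word of care is that $a$ and $e$ are obtained from overspill and are therefore genuinely parameters — there is in general no way to eliminate them, which is why the statement only asserts definability \emph{with parameters}. I would add a remark that this is unavoidable: by Tarski's theorem a $\UTB$-truth class can never be parameter-free definable in $(M,T)$ alone, since it is undefinable already in the arithmetical reduct together with any parameter-free $\mathcal{L}_{\PAT}$-definable set; the content of the corollary is precisely that finitely many parameters \emph{suffice}. I do not expect any real obstacle here — the work has already been done in the Main Theorem; the only thing to be checked is the bookkeeping that every step of that proof (the families $(\rho_i)$, $(\gamma_i)$, the formula $\delta = \zeta \wedge \theta$, and the two overspill applications producing $c$, then $d$, and finally the choice of $e$) contributes nothing beyond primitive recursive data and the nonstandard parameters, so that collecting them yields a single defining formula with parameters.
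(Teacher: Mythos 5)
Your proposal matches the paper's own treatment: the paper likewise derives this proposition purely by inspection of the Main Theorem's proof, noting that the $\UTB$ predicate constructed there has the explicit form $T\gamma_a(\num{\rho_b(\num{x})})$ for nonstandard $a,b$, which, since $(\gamma_i)$ and $(\rho_i)$ are primitive recursive families, is an $\mathcal{L}_{\PAT}$-formula with the two parameters $a$ and $b$. One caveat: your closing remark that parameters are \emph{unavoidable} ``by Tarski's theorem'' is not justified --- Tarski's theorem only rules out definability of arithmetical truth in the arithmetical reduct $M$, whereas a parameter-free $\mathcal{L}_{\PAT}$-formula may use the compositional predicate $T$ itself, so no such obstruction applies to parameter-free definability in the expanded structure $(M,T)$; you should drop or weaken that remark, though it does not affect the correctness of the proof of the stated proposition.
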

				
Note that by a standard overspill argument a model carries a $\UTB$ class precisely when it carries a fully inductive partial compositional truth class. As we have seen above, a model $M \models \PA$ with a partial compositional truth class carries also a $\UTB$ class. Thus we may conclude that a model $M \models \PA$ carries a partial compositional truth class if and only if it carries an \emph{inductive} partial compositional truth class, thus we may get full induction in this setting somewhat for free. We did not check the details, but we believe, that the results carry over to the satisfaction predicate setting. Thus a corollary in the more classical language of satisfaction predicates would be as follows: an arbitrary model $M \models \PA$ has a partial satisfaction class if and only if it has a partial inductive satisfaction class.		
		
Let us close this section with some remarks of the possible strategy of proving that
	$$\mathfrak{CT}^- \neq \mathfrak{UTB}.$$

Models of $\UTB$ display a good deal of structural properties which they possibly do not share with the models of $\CT^-.$ Namely:
	
	\begin{enumerate}
		\item Every model in $\mathfrak{UTB}$ has an elementary end-extension in $\mathfrak{UTB}.$
		\item The class $\mathfrak{UTB}$ is closed under cofinal extensions.
		\item Every recursively saturated model of countable cofinality is in $\mathfrak{UTB}.$
	\end{enumerate}
		
We conjecture that for $\mathfrak{CT}^-$ all these properties fail, although we have not managed to show it, nor have we any clue how to prove it.

\end{document}